 \newtheorem{theorem}{Theorem}%[section]
\newtheorem{prop}[theorem]{Proposition}
\newtheorem{lemma}[theorem]{Lemma}
\newtheorem{cor}[theorem]{Corollary}
\newcommand{\ind}{\mathbf{1}}
\newcommand{\supp}{\text{supp}}
\title{An almost-tight $L^2$ autoconvolution inequality}
\author{Ethan Patrick White \\ \small{The University of British Columbia}}
\date{}
\begin{document}

\maketitle

\begin{abstract}

Let $\mathcal{F}$ denote the set of functions $f \colon [-1/2,1/2] \to \mathbb{R}$ such that $\int f = 1$. We determine the value of $\inf_{f \in \mathcal{F}} \| f \ast f \|_2$ up to a 0.0014\% error, thereby making progress on a problem asked by Ben Green. Furthermore, we prove that a unique minimizer exists. As a corollary, we obtain improvements on the maximum size of $B_h[g]$ sets for $(g,h) \in \{ (2,2),(3,2),(4,2),(1,3),(1,4)\}$. 

\end{abstract}

\vspace{0.5cm}

%Let $g,N \in \mathbb{N}$ and $A \subset [N]$. We say $A$ is \emph{$g$-Sidon} if for all $m \in \mathbb{Z}$ 
%\[ \# \{ (a,b) \in A^4 \colon a+b = m\} \leq g.\]

\section{Introduction}

Let $g,h,N$ be positive integers. A subset $A \subset [N] = \{1,2,\ldots,N\}$ is an \emph{$B_h[g]$-set} if for every $x \in \mathbb{Z}$, there are at most $g$ representations of the form 
\[ a_1 + a_2 + \ldots + a_h = x, \quad a_i \in A, \quad 1 \leq i \leq h, \]
where two representations are considered to be the same if they are permutations of each other. As a shorthand, we let $B_h = B_h[1]$. Note that $B_2$ sets are the very well known \emph{Sidon sets}. Let $R_h[g](N)$ denote the largest size of subset $A \subset [N]$ such that $A$ is a $B_h[g]$ set. By counting the number of ordered $h$-tuples of elements of $A$, we have the simple bound $\binom{|A|+h-1}{h} \leq hN$, which implies $R_h[g](N) \leq (ghh!N)^{1/h}$. On the other hand, Bose and Chowla showed that there exist $B_h$ sets of size $N^{1/h}(1+o(1))$, where we use $o(1)$ to denote a term going to zero as $N \to \infty$. This lower bound result has been generalized to more pairs $(g,h)$ by several authors, \cite{CGT,CJ,LIND}. Recently the bound $R_h[g](N) \geq (Ng)^{1/h}(1-o(1))$ for all $N$, $g \geq 1$, and $h\geq 2$ was obtained in \cite{GTT}. In general, estimating the constant
\[ \sigma_h(g) = \lim_{N \to \infty} \frac{R_h[g](N)}{(gN)^{1/h}},\]
is an open problem. In fact, the only case for which the above limit is known to exist is in the case of the classical Sidon sets, where we have $\sigma_2(1) = 1$. Henceforth we will understand upper and lower bounds on $\sigma_h(g)$ to be estimates on the $\limsup$ and $\liminf$, respectively. \\

Several improved upper bounds for $\sigma_h(g)$ have been obtained by various authors, for references to many of them, as well as an excellent resource on $B_h[g]$ sets in general, see~\cite{KO}. In this work we will improve the upper bounds on $\sigma_h(g)$ for $h = 2$ and $2 \leq g \leq 4$, as well as $g = 1$ and $h = 3,4$. No improvement on $\sigma_h(g)$ for $g = 1$ and $h=3,4$ has been made since~\cite{BG} in 2001. The most recent improvements on estimates for $\sigma_2(g)$ which are the best for $2 \leq g \leq 4$ are the following.

% try out a table describing progress?

% \leq \sqrt{1.864(2g-1)}   & \text{Cilleruelo, Ruzsa \& Trujillo \cite{CRT}}

\begin{align*}
\sigma_2(g) & \leq \sqrt{1.75(2g-1)}   & \text{Green \cite{BG}}\\
  & \leq \sqrt{1.74217(2g-1)}  & \text{Yu \cite{YU}}\\
 & \leq \sqrt{1.740463(2g-1)}  & \text{Habsieger \& Plagne \cite{HP}}
\end{align*}

On the other hand, for $g \geq 5$, the best recent upper bounds are below.

\begin{align*}
\sigma_2(g) & \leq \sqrt{3.1696g}   & \text{Martin \& O'Bryant \cite{MO}}\\
  & \leq \sqrt{3.1377g}  & \text{Matolcsi \& Vinuesa \cite{MV}}\\
 & \leq \sqrt{3.125g}  & \text{Cloninger \& Steinerberger \cite{CS}}
\end{align*}

Interestingly, the key to improving upper bounds on $\sigma_2(g)$ is to better estimate the 2-norm of an autoconvolution for small $g$ and infinity norm of an autoconvolution for large $g$. In the case of the infinity norm, the best known results are 
\[0.64 \leq \inf_{\substack{f \colon [-1/2,1/2] \to \mathbb{R}_{\geq 0} \\ \int f \ = \ 1}} \| f \ast f\|_\infty \leq 0.75496.\]
The lower bound is due to Cloninger \& Steinerberger \cite{CS} and the upper bound is due to Matolcsi \& Vinuesa~\cite{MV}. It is believed that the upper bound above is closer to the truth. The method of Cloninger \& Steinerberger is computational, and is limited by a nonconvex optimization program.

Throughout we will use $\mathcal{F}$ to denote the family of functions $f \in L^1(-1/2,1/2)$ such that $\int f = 1$. For $1 \leq p \leq \infty$ we define 
\[ \mu_p = \inf_{f \in \mathcal{F}} \| f \ast f \|_p =\inf_{f \in \mathcal{F}}\left( \int_{-1}^1 \left| \int_{-1/2}^{1/2} f(t)f(x-t) dt \right|^p  dx\right)^{1/p} ,\]
the minimum $p$-norm of the autoconvolution supported on a unit interval. Determining $\mu_p$ precisely is an open problem for all values of $p$, and is the content of Problem 35 in Ben Green's \emph{100 open problems} \cite{BGP}. Prior to this work, the best known bounds for the $p = 2$ case are 
\[ 0.574575 \leq \mu_2^2 \leq 0.640733,\]
where the lower bound is due to Martin \& O'Bryant \cite{MO2} and the upper bound is due to Green~\cite{BG}. Our main theorem is the following; we improve upper and lower bounds for $\mu_2$. 

\begin{theorem}\label{MT} 
The $L^2$ norm of the autoconvolution $f \ast f$ for $f \in \mathcal{F}$ can be bounded as follows.
\[  \emph{\textbf{0.5746}}35728 \leq \mu_2^2  \leq \emph{\textbf{0.5746}}43711.\]

\end{theorem}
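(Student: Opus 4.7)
The plan is a computer-assisted proof centered on the Euler--Lagrange equation. A variational argument shows that any minimizer $f_\ast \in \mathcal{F}$ of $\|f \ast f\|_2$ satisfies
\[ (f_\ast \ast f_\ast \ast \tilde{f_\ast})(x) \equiv \mu_2^2 \quad \text{for all } x \in [-1/2, 1/2], \]
where $\tilde{f}(x) = f(-x)$. The problem is invariant under $f \mapsto f(-\cdot)$, so I restrict to even trial functions. I would first numerically approximate a solution of the discretized EL equation over a rich finite-dimensional class---piecewise polynomials on a fine equipartition of $[-1/2,1/2]$, augmented by explicit singular basis elements at the endpoints motivated by the inverse-square-root shape of Green's near-extremal Chebyshev weight $f \propto (1/4-x^2)^{-1/2}$. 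This produces an explicit trial $f_\ast \in \mathcal{F}$ with tiny residual $\rho(x) := (f_\ast \ast f_\ast \ast \tilde{f_\ast})(x) - \lambda$ on $[-1/2,1/2]$, for an explicit constant $\lambda$.

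The upper bound $\mu_2^2 \leq \|f_\ast \ast f_\ast\|_2^2 = 0.574643711$ follows from evaluating this norm in exact rational arithmetic---feasible because $f_\ast$ is piecewise polynomial, so $f_\ast \ast f_\ast$ is also a concrete piecewise polynomial whose squared $L^2$ norm is a rational integral. For the matching lower bound I expand about the trial: writing $f = f_\ast + \delta$ with $\int \delta = 0$,
\[ \|f \ast f\|_2^2 - \|f_\ast \ast f_\ast\|_2^2 = 4\int \rho(x)\,\delta(x)\,dx + Q(\delta) + R(\delta), \]
where $R$ collects cubic and quartic terms in $\delta$ and $Q$ is the second-variation quadratic form. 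A Plancherel computation (splitting $\delta = \delta_e + \delta_o$ into even and odd parts, using that $f_\ast$ is real even) gives
\[ Q(\delta) \;=\; 6\int |\hat{f_\ast}(\xi)\,\hat\delta_e(\xi)|^2 \, d\xi \;+\; 2\int |\hat{f_\ast}(\xi)\,\hat\delta_o(\xi)|^2 \, d\xi \;\geq\; 0, \]
which is coercive so long as $\hat{f_\ast}$ does not vanish on a set of positive measure in the relevant frequency range. The convex functional $4\int \rho\,\delta + Q(\delta)$ is then minimized explicitly---its value is $-O(\|\rho\|^2)$ for an appropriate dual norm---and a separate elementary argument controls the higher-order term $R(\delta)$ for the remaining range of $\delta$. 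Driving $\|\rho\|$ below a computable threshold via a fine-enough discretization yields $\mu_2^2 \geq 0.574635728$.

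The main obstacle is the precision required to close the bounds to within $\sim 8 \times 10^{-6}$. The trial $f_\ast$ must satisfy the EL equation in sup-norm to a correspondingly small tolerance, demanding both a very fine discretization and a basis rich enough to track the endpoint singularities of the true minimizer. All convolutions, integrals, and piecewise extrema must be evaluated in certified arithmetic (exact rationals where possible, interval arithmetic otherwise) so that accumulated roundoff is absorbed well below the target gap. The Plancherel-based positivity of $Q$ is the conceptual heart of the lower bound, while managing the error budget across the symbolic computation is the principal engineering challenge.
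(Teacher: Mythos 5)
Your upper-bound half is fine in principle: any explicit trial function in $\mathcal{F}$ whose autoconvolution norm you can certify gives an upper bound, and whether the trial comes from an approximate Euler--Lagrange solve (your route) or from the paper's quadratically constrained convex program in the Fourier coefficients (via Lemma~\ref{FFid}) is a matter of how the trial is produced. One small inconsistency: once you augment the piecewise-polynomial basis with singular endpoint elements such as $(1/4-x^2)^{-1/2}$, the autoconvolution is no longer piecewise polynomial and ``exact rational arithmetic'' is unavailable; you would need certified interval/series evaluation there, as the paper in effect does when it bounds Fourier tails of its degree-$30000$ trigonometric trial.

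The genuine gap is in the lower bound. Your expansion and the formula $Q(\delta)=6\int|\tilde f_\ast\,\tilde\delta_e|^2+2\int|\tilde f_\ast\,\tilde\delta_o|^2$ are correct (indeed $f\mapsto\|\tilde f\|_4^4=\|f\ast f\|_2^2$ is convex, so $Q(\delta)+R(\delta)\ge 0$ automatically and the cubic/quartic remainder is not the real issue). The real issue is certifying a uniform lower bound for $4\int\rho\,\delta+Q(\delta)$ over \emph{all} admissible perturbations: $\delta=f-f_\ast$ ranges over an unbounded set in $L^1$, while $Q$ controls only the weighted quantity $\int|\tilde f_\ast|^2|\tilde\delta|^2$, and $\tilde f_\ast$ is an entire function of exponential type, so it decays at infinity and necessarily has zeros; hence $Q$ is not coercive in any norm in which $\int\rho\,\delta$ is manifestly continuous, and ``$\hat f_\ast$ does not vanish on a set of positive measure'' is far from sufficient. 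Concretely, the claimed minimum ``$-O(\|\rho\|^2)$ in an appropriate dual norm'' is of the form $2\inf_P\int|\tilde P(y)|^2/|\tilde f_\ast(y)|^2\,dy$ over extensions $P$ of the residual off $[-1/2,1/2]$, i.e.\ a certified deconvolution estimate against your numerical trial; establishing that this is finite, computable, and below $10^{-5}$ is essentially as hard as the original problem and is nowhere addressed. The paper avoids all of this: Lemma~\ref{holdlem} is an unconditional H\"older/duality certificate, valid for every $f\in\mathcal{F}$ and every admissible dual function $g$, namely $1/2\le\bigl(\sum_{k\ne 0}|\hat F(k)|^4\bigr)^{1/4}\bigl(\sum_{k\ne 0}|\hat G(k)|^{4/3}\bigr)^{3/4}$; choosing $\hat g(k)\propto\hat f_P(k)^3$ (motivated by the same stationarity identity you quote, cf.\ Lemma~\ref{constantconv}) reduces the rigorous lower bound to summing $|\hat G_P(k)|^{4/3}$ with an explicit tail estimate, with no coercivity, no remainder analysis, and no dependence on how close $f_P$ is to optimal except in the sharpness of the resulting constant. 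Without an ingredient playing that role, your lower bound $\mu_2^2\ge 0.574635728$ does not close.
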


We are also able to prove that there exists a unique minimizer $f \in \mathcal{F}$ of the $L^2$ norm of an autoconvolution. This combined with methods of Green~\cite[Theorems 15, 17, 24]{BG} give the following corollary.

\begin{cor}\label{NTcor} The following asymptotic bounds on $B_h[g]$ sets hold.

\[ \sigma_2(g) \leq \left(\frac{2-1/g}{\mu_2^2}\right)^{1/2} ,\quad \sigma_3(1) \leq \left(\frac{2}{\mu_2^2} \right)^{1/3}, \quad\sigma_4(1) \leq \left(\frac{4}{\mu_2^2} \right)^{1/4}.\]

\end{cor}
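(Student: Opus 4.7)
The plan is to invoke the reductions carried out in Green's paper~\cite{BG}, specifically Theorems~15, 17, and 24, as a black box, and then substitute the improved lower bound on $\mu_2^2$ from Theorem~\ref{MT}. The analytic heart of the corollary lies in Green's work; the task here is essentially to make sure the inputs align.

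For the $B_2[g]$ bound, Green's method begins with a near-extremal $B_2[g]$ set $A \subset [N]$ and associates to it a function $f_A \in \mathcal{F}$ obtained by placing narrow bumps at the rescaled points $a/N$ (translated into $[-1/2,1/2]$) and normalizing so that $\int f_A = 1$. One then computes $\|f_A \ast f_A\|_2^2$ via Parseval and controls the resulting additive-energy quantity using the $B_2[g]$ hypothesis, which bounds the ordered representation function $r_A(x) = \#\{(a_1,a_2) \in A^2 : a_1+a_2 = x\}$ by $2g - \mathbf{1}_{x \in 2A}$. Sending $N \to \infty$ and using the trivial $\mu_2^2 \leq \|f_A \ast f_A\|_2^2$ yields $\sigma_2(g)^2 \mu_2^2 \leq 2 - 1/g$, which is the first inequality.

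For the $\sigma_3(1)$ and $\sigma_4(1)$ bounds, Green works with the three-fold and four-fold autoconvolutions of $f_A$ respectively, and reduces the control of their norms back to the two-fold $L^2$ autoconvolution via Cauchy--Schwarz or H\"older-type inequalities combined with the uniqueness of the extremizer of $\|f \ast f\|_2$ over $\mathcal{F}$. The uniqueness statement is essential: without it, one cannot argue that the function $f_A$ associated to a near-extremal $B_3$ or $B_4$ set is close to the unique minimizer of the two-fold autoconvolution norm, and the reduction loses constants. Since uniqueness has been established earlier in this paper, Green's Theorems~17 and~24 apply and produce the remaining two inequalities after substituting the improved $\mu_2^2$ bound from Theorem~\ref{MT}.

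The main obstacle I would anticipate is verifying that the notion of uniqueness established here is of the sort required by Green's arguments---typically, that any sequence $(f_n) \subset \mathcal{F}$ with $\|f_n \ast f_n\|_2 \to \mu_2$ converges in an appropriate sense to the unique extremizer, so that near-extremal $B_3$ and $B_4$ sets give rise to functions that approximate the minimizer. Once this compatibility is confirmed, the three inequalities of the corollary follow by direct substitution, and no further analytic work is required.
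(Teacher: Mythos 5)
Your plan treats Green's Theorems 15, 17 and 24 as black boxes into which $\mu_2^2$ can be substituted, but those theorems, as stated in \cite{BG}, carry the numerical constant coming from Green's auxiliary function with $\gamma(g)>1/7$; they are not parameterized by $\mu_2^2$, so there is nothing to substitute into. The actual bridge between $\mu_2^2$ and Green's machinery --- which your proposal never mentions --- is Green's Theorem 6 (Theorem~\ref{BGthm} here), whose conclusion involves $\gamma(g)=2\bigl(\sum_{r\geq 1}|\tilde g(r/2)|^{4/3}\bigr)^{-3}$ for an auxiliary $C^1$ function $g$ with $\int g=2$. The paper's proof consists of exhibiting such a $g$ with $\gamma(g)$ arbitrarily close to $2\mu_2^2-1$: the extremizer $f^\Diamond$ exists (Proposition~\ref{exun}), its periodized triple autoconvolution is constant on $(-1/2,1/2)$ (Lemma~\ref{constantconv}), and choosing $\hat g(k)\propto\hat{f^\Diamond}(k)^3$ makes the H\"older step in Lemma~\ref{holdlem} tight, giving $\gamma(g)=2\mu_2^2-1$ exactly; a smoothing argument as in Lemma~\ref{L2lem} then makes $g$ a $C^1$ function at an arbitrarily small cost. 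Only after this replacement does one rerun the derivations of \cite[Equation 30, Lemma 16, Equation 37]{BG} with $1/7$ replaced by $2\mu_2^2-1$ (equivalently $8/7$ by $2\mu_2^2$) to obtain the three stated bounds. Note also that the corollary is stated in terms of $\mu_2^2$ itself, so Theorem~\ref{MT} is not an input to its proof at all; it only converts the corollary into numerical improvements.

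The role you assign to uniqueness is also not the right one. Neither the paper nor Green's method needs any stability statement of the form ``near-extremal $B_3$ or $B_4$ sets give functions close to the unique minimizer,'' and no such statement is proved here; what is used is the existence of $f^\Diamond$ (together with its evenness, which is where uniqueness enters indirectly) in order to run the variational argument of Lemma~\ref{constantconv} and hence build the optimal auxiliary $g$. Your anticipated ``main obstacle'' --- checking that minimizing sequences converge to the extremizer --- is therefore a non-issue, while the genuine work (the construction of $g$ with $\gamma(g)\to 2\mu_2^2-1$) is absent from the proposal. Finally, your self-contained sketch for $\sigma_2(g)$ is doubtful: the bound $r(x)\leq 2g-\mathbf{1}_{x\in 2\cdot A}$ only saves a term of order $|A|$ in the additive energy, which yields the constant $2$ rather than $2-1/g$; the $2-1/g$ comes out of Green's Fourier-side argument (his Equation 37), not from the direct energy count you describe.
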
 

Due to Theorem~\ref{MT}, we have improved upper bounds for $\sigma_h[g]$ for $h=2$ and $2 \leq g \leq 4$ as well as $g = 1$ and $h = 3,4$. One of the main theorems proved by Green in~\cite{BG} gives bounds on the additive energy of a discrete function on $[N]$. Our bound in the continuous case yields the following.

\begin{cor}\label{AEcor} Let $H \colon [N] \to \mathbb{R}$ be a function with $\sum_{j=1}^N H(j) = N$. Then for all sufficiently large $N$ we have 
\[ \sum_{a+b = c+d} H(a)H(b)H(c)H(d) \geq \mu_2^2 N^3.\]

\end{cor}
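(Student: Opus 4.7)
The plan is to discretize: I would build a step function $f \in \mathcal{F}$ directly from $H$ and then transfer the universal lower bound $\|f\ast f\|_2^2 \geq \mu_2^2$ (immediate from the definition of $\mu_2$, and quantified by Theorem~\ref{MT}) to a bound on the additive energy of $H$. Concretely, I would set $f(t) = H(j)$ for $t \in I_j := [(j-1)/N - 1/2,\, j/N - 1/2)$. The hypothesis $\sum_{j=1}^N H(j) = N$ gives $\int f = \tfrac{1}{N}\sum_j H(j) = 1$, so $f \in \mathcal{F}$ and hence $\|f\ast f\|_2^2 \geq \mu_2^2$.

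The bulk of the work is to compute $\|f \ast f\|_2^2$ explicitly in terms of the representation function $r(m) := \sum_{\substack{a+b = m \\ a,b \in [N]}} H(a)H(b)$, extended by $r(m) = 0$ outside $\{2,\ldots,2N\}$. Because $f$ is constant on each length-$1/N$ interval $I_j$, the overlap $|I_j \cap (x - I_k)|$ depends only on $m := j+k$ and equals $\max\bigl(0,\, 1/N - |(m-1)/N - (x+1)|\bigr)$. Grouping by $m$ and writing $y = x+1 \in [(m_0-1)/N,\, m_0/N]$ gives
\[
(f \ast f)(x) = r(m_0)\bigl(\tfrac{m_0}{N} - y\bigr) + r(m_0+1)\bigl(y - \tfrac{m_0-1}{N}\bigr),
\]
so $f \ast f$ is the piecewise linear interpolant of the values $r(m)/N$ at the nodes $(m-1)/N - 1$. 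Integrating the square on each of the $2N$ strips, rescaling by $v = N(y - (m_0-1)/N)$, and using $\int_0^1 (a(1-v) + bv)^2\,dv = \tfrac{1}{3}(a^2 + ab + b^2)$, yields the clean identity
\[
\|f \ast f\|_2^2 = \frac{1}{3N^3}\sum_{m=1}^{2N}\Bigl(r(m)^2 + r(m)r(m+1) + r(m+1)^2\Bigr).
\]

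Finally, I would apply AM-GM in the form $r(m)r(m+1) \leq \tfrac{1}{2}(r(m)^2 + r(m+1)^2)$, together with the shift $\sum_{m=1}^{2N} r(m+1)^2 = \sum_{m=1}^{2N} r(m)^2 = E(H)$ (valid because $r$ vanishes at the endpoints), to bound the right-hand side above by $E(H)/N^3$, where $E(H) := \sum_{a+b=c+d} H(a)H(b)H(c)H(d) = \sum_m r(m)^2$ is the additive energy. Combined with $\|f \ast f\|_2^2 \geq \mu_2^2$ this yields the desired $E(H) \geq \mu_2^2 N^3$. The only real obstacle is the bookkeeping in the explicit computation of $\|f \ast f\|_2^2$; once the displayed identity is in hand, AM-GM finishes everything in one line. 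Notably, this argument gives the inequality for every $N \geq 1$, so the ``sufficiently large $N$'' hypothesis in the statement appears unnecessary.
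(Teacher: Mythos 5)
Your proposal is correct and takes essentially the same route as the paper: both build the step function $f$ from $H$, note that $f\ast f$ is piecewise linear with node values $H\ast H(m)/N$, and bound each segment's contribution by the average of its squared endpoint values (your exact identity $\int_0^1(a(1-v)+bv)^2\,dv=\tfrac13(a^2+ab+b^2)$ followed by AM--GM is equivalent in content to the paper's convexity estimate $\int_a^b \ell(x)^2\,dx \le \tfrac{\ell(a)^2+\ell(b)^2}{2}(b-a)$). Your observation that the argument works for every $N\ge 1$ is consistent with the paper's proof, which likewise never uses that $N$ is large.
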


The methods of Habsieger \& Plagne and Yu \cite{HP,YU}, Cloninger \& Steinerberger \cite{CS}, and Martin \& O'Bryant are all limited by long computation times. In contrast, the key to our improvement is a convex optimization program whose optimum value is shown to converge to $\mu_2^2$. The strategy of using Fourier analysis to produce a convex program to obtain bounds on a convolution-type inequality was also employed recently by the author to improve bounds on Erd\H{o}s' minimum overlap problem in \cite{EPW}. We hope that our methods may also be useful in estimations of autoconvolution for other $p$-norms.\\

% Outline of thhis paper... 

\section{Qualitative results}

In this section we prove the existence and uniqueness to the solution of the following optimization problem. 
\begin{align}
\textsc{Infimum: } &  \int_{-1}^{1} (f \ast f)^2(x) \ dx\label{objfunc} \\ 
\textsc{Such that: }&  f \in L^1, \quad \supp f \subset [-1/2,1/2], \quad  \int_{-1/2}^{1/2} f(x) \ dx = 1 .\label{qualcon}
\end{align} 

For all $f \in L^1(\mathbb{R})$ we define the Fourier transform on $\mathbb{R}$ as 
\[ \tilde{f}(y) = \int e^{-2\pi i xy}f(x) \ dx.\]

For any $f$ as in \eqref{qualcon}, we note that $\widetilde{f \ast f} = \tilde{f}^2$ and so by Parseval's identity
\begin{equation}\label{OFF} \int_{-1}^{1} (f \ast f)^2(x) \ dx = \int |\tilde{f}(y)|^4 \ dy.\end{equation}

We remark that \eqref{qualcon} defines the family $\mathcal{F}$ seen in the introduction. The following proposition proves the existence and uniqueness of an optimizer in $\mathcal{F}$ to \eqref{objfunc} using the `direct method in the calculus of variations'.

\begin{prop}\label{exun} 

There exists a unique extremizing function $f \in \mathcal{F}$ to the optimization problem \eqref{objfunc}. \end{prop}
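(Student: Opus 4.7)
The plan is to apply the direct method in the calculus of variations on the Fourier side, using the identity in \eqref{OFF}. Let $\{f_n\}_{n\geq 1} \subset \mathcal{F}$ be a minimizing sequence with $\|f_n * f_n\|_2 \to \mu_2$, and set $\phi_n = \tilde{f}_n$. By the Paley-Wiener theorem each $\phi_n$ is entire of exponential type $\pi$; it satisfies $\phi_n(0) = 1$, and by \eqref{OFF} we have $\|\phi_n\|_4^4 = \|f_n * f_n\|_2^2$, which is uniformly bounded.

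For compactness I would invoke the Plancherel-Polya inequality, which gives $\|F\|_\infty \leq C\|F\|_4$ for any entire $F$ of exponential type $\pi$. This yields a uniform bound on $\|\phi_n\|_\infty$; combined with the standard estimate $|\phi_n(z)| \leq \|\phi_n\|_\infty e^{\pi|\operatorname{Im} z|}$ valid throughout $\mathbb{C}$, the family $\{\phi_n\}$ is locally uniformly bounded on $\mathbb{C}$ and thus normal. By Montel's theorem extract a subsequence $\phi_{n_k} \to \phi$ converging locally uniformly on $\mathbb{C}$ to an entire function $\phi$ of exponential type at most $\pi$ with $\phi(0) = 1$. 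Fatou's lemma applied on $\mathbb{R}$ gives $\int |\phi|^4 \leq \liminf_k \int |\phi_{n_k}|^4 = \mu_2^2$.

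The main obstacle is to verify that $\phi = \tilde f$ for a genuine $f \in \mathcal{F}$, rather than merely a compactly-supported distribution. I would argue as follows: $\phi^2$ is entire of exponential type $2\pi$ and lies in $L^2(\mathbb{R})$, so the $L^2$ Paley-Wiener theorem produces $g \in L^2([-1,1])$ with $\tilde g = \phi^2$. Taking $f$ to be the tempered distribution on $[-1/2, 1/2]$ with $\tilde f = \phi$, we have $f \ast f = g \in L^2$. To promote $f$ to an $L^1$ function, smooth $f$ by convolution with a narrow bump $\psi_\varepsilon$ and rescale to restore support in $[-1/2, 1/2]$; the resulting $f_\varepsilon \in \mathcal{F}$ satisfies $\|f_\varepsilon * f_\varepsilon\|_2 \to \|f \ast f\|_2$ as $\varepsilon \to 0$ (by a direct Fourier computation using $|\widetilde{\psi_\varepsilon}(y)|^2 \leq 1$ and $\widetilde{\psi_\varepsilon}(y) \to 1$ pointwise, with dominated convergence against $|\phi|^4$). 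Combining this with the Euler-Lagrange characterization of an extremizer (which forces the inverse Fourier transform of $|\phi|^2 \bar{\phi}$ to be constant on $[-1/2, 1/2]$) yields the regularity needed to place $f$ in $L^1([-1/2, 1/2])$.

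For uniqueness I would use strict convexity of the $L^4$ norm on $L^4(\mathbb{R},\mathbb{C})$. If $f_1, f_2 \in \mathcal{F}$ both attain $\|h \ast h\|_2 = \mu_2$, then $\|\tilde{f}_1\|_4 = \|\tilde{f}_2\|_4 = \mu_2^{1/2}$, and $\tfrac12(\tilde f_1 + \tilde f_2)$ is the Fourier transform of $\tfrac12(f_1 + f_2) \in \mathcal{F}$, hence has $L^4$ norm $\geq \mu_2^{1/2}$. This forces equality in the triangle inequality $\|\tilde{f}_1 + \tilde{f}_2\|_4 \leq \|\tilde{f}_1\|_4 + \|\tilde{f}_2\|_4$, which by strict convexity of $L^4$ gives $\tilde{f}_1 = \lambda \tilde{f}_2$ for some $\lambda \geq 0$; the normalization $\tilde{f}_j(0) = 1$ forces $\lambda = 1$, so $f_1 = f_2$.
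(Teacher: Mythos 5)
Your compactness argument on the Fourier side is sound and genuinely different from the paper's: instead of extracting weak-$\ast$ limits of $f_n$ in $L^1(-1/2,1/2)$ and of $\tilde f_n$ in $L^\infty(\mathbb{R})$, you use a Nikolskii-type inequality $\|F\|_\infty \leq C\|F\|_4$ for entire functions of exponential type $\pi$, Phragm\'en--Lindel\"of, and Montel to get a locally uniform limit $\phi$ with $\phi(0)=1$, and Fatou plus your smoothing computation gives $\|\phi\|_4^4=\mu_2^2$. The uniqueness paragraph is fine and is essentially the paper's argument (equality in the $L^4$ triangle inequality plus the normalization $\hat f(0)=1$).

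The genuine gap is precisely the step you flag as the main obstacle: you never show that $\phi=\tilde f$ for an actual $f\in\mathcal F$, i.e. an $L^1$ function supported in $[-1/2,1/2]$ with $\int f=1$. Paley--Wiener--Schwartz only yields a distribution $u$ supported in $[-1/2,1/2]$ with $\tilde u=\phi$, and $\phi^2\in L^2$ only upgrades $u\ast u$ to an $L^2$ function, not $u$ itself. Your smoothing step (convolving $u$ with $\psi_\varepsilon$ and rescaling) produces elements of $\mathcal F$ whose objective values tend to $\|\phi\|_4^4$; this proves that the relaxed, distributional problem has the same infimum $\mu_2^2$ and that $\phi$ attains it in the relaxed class, but it does not produce a minimizer inside $\mathcal F$, which is what the proposition asserts. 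The closing sentence --- that the Euler--Lagrange condition forces the inverse transform of $|\phi|^2\bar\phi$ to be constant on $[-1/2,1/2]$ and that this ``yields the regularity needed to place $f$ in $L^1$'' --- is an unsupported assertion rather than a proof: constancy of the triple convolution on the open interval (the analogue of Lemma~\ref{constantconv}, which the paper derives only after an extremizer in $\mathcal F$ is known to exist) gives no evident control on the integrability of $u$; a priori $u$ could be, for instance, a measure with a singular part concentrated near $\pm 1/2$, and excluding that requires a genuine argument that is missing here. By contrast, the paper closes the identification step by taking its limits in topologies that keep $f$ in $L^1(-1/2,1/2)$ and then testing against $e^{-2\pi i xy}\ind_{(-1/2,1/2)}$ and $\ind_{(-1/2,1/2)}$ to land directly in $\mathcal F$; your route, as written, leaves the existence half of the proposition unproved.
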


\begin{proof} Let $\{f_n\} \subset \mathcal{F}$ be a minimizing sequence such that ${\lim_{n \to \infty } \| f_n \ast f_n \|_2 = \mu_2}$. Since $L^1$ and $L^\infty$ are separable we can apply the sequential Banach-Alaoglu Theorem to conclude the existence of $f \in L^{1}(-1/2,1/2)$ and $g \in L^\infty(\mathbb{R})$ such that 
\begin{align*}
f_n &\stackrel{\ast}{\rightharpoonup} f & \text{converges weakly in } L^{1}(-1/2,1/2), \\
\tilde{f_n} &\stackrel{\ast}{\rightharpoonup} g & \text{converges weakly in } L^{\infty}(\mathbb{R}),
\end{align*}
where possibly we passed to a subsequence of $\{f_n\}$ to make the above hold. For all $h \in L^{1}(\mathbb{R})$, by definition of convergence in the weak topology we have
\[ \langle g,h \rangle = \lim_{n \to \infty} \langle \tilde{f}_n ,h \rangle =  \lim_{n \to \infty} \langle {f}_n ,\tilde{h} \rangle =  \langle {f} , \tilde{h} \rangle= \langle \tilde{f} , h \rangle, \]
hence $g = \tilde{f}$. Note that for all $y \in \mathbb{R}$ we have
\[ \tilde{f_n}(y) = \int f_n(x)e^{-2\pi i x y} \ind_{(-1/2,1/2)}(x) \ dx, \]
and since $e^{-2\pi i x y} \ind_{(-1/2,1/2)}(x) \in L^\infty(\mathbb{R})$, by weak convergence we see $\lim_{n \to  \infty} \tilde{f_n}(y) = \tilde{f}(y)$. In addition, $\lim_{n \to  \infty} |\tilde{f_n}(y)|^4 = |\tilde{f}(y)|^4$ and so by Fatou's lemma:
\[ \int  |\tilde{f}(y)|^4 \ dy \leq \liminf_{n \to \infty} \int  |\tilde{f_n}(y)|^4\ \ dy .\]
Finally, we have 
\[1 = \lim_{n \to \infty} \langle \ind_{(-1/2,1/2)}, f_n \rangle = \langle \ind_{(-1/2,1/2)},f \rangle = \int_{-1/2}^{1/2} f(x) \ dx.\] 
We conclude that $f \in \mathcal{F}$ is an extremizing function. For uniqueness, suppose that $f,g \in \mathcal{F}$ satisfy $\|\tilde{f}\|_4 = \| \tilde{g}\|_4 = \mu_2^{1/2}$. Then by Minkowski's inequality,
\[ \left\| \frac{\tilde{f} + \tilde{g}}{2} \right\| \leq \frac{1}{2} \left( \|\tilde{f} \| + \|\tilde{g}\| \right) = \mu_2^{1/2}.\]
Minkowski's inequality above must be an equality, implying $f$ and $g$ are linearly dependant. Since $f,g$ have the same average value, we conclude that $f = g$ and so the extremizing function is unique.

\end{proof} 

Note that the uniqueness of the optimizer implies that it must be even. Throughout we will denote the unique optimizer by $f^\Diamond \in \mathcal{F}$. Next we prove that there is an $L^2$ function with objective value \eqref{objfunc} arbitrarily close to $\mu_2^2$. 

% $L^2$ functions are close to the optimizer.

%%%%%%%%%%%%%%%%%%%%%%%%%%%%%%%%%
%%%%%%%%%%%%%%%%%%%%%%%%%%%%%%%

\begin{lemma}\label{L2lem} We can add the constraint $f \in L^2$ to \eqref{qualcon} without changing the optimum. 

\end{lemma}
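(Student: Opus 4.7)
The plan is to show that for every $f \in \mathcal{F}$ with $\|f \ast f\|_2 < \infty$ (the only case relevant to the infimum) and every $\epsilon > 0$, one can exhibit $g_\epsilon \in \mathcal{F} \cap L^2$ with $\|g_\epsilon \ast g_\epsilon\|_2$ arbitrarily close to $\|f \ast f\|_2$. Since restricting to $L^2$ can only raise the infimum, this will prove equality. The construction is \emph{mollification followed by rescaling}, a standard maneuver whose only wrinkle is that convolving $f$ with a smooth bump widens its support beyond $[-1/2,1/2]$, so a compensating dilation is needed.

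Concretely, fix a smooth bump $\phi_\epsilon \in C_c^\infty(\mathbb{R})$ supported in $(-\epsilon,\epsilon)$ with $\int \phi_\epsilon = 1$, and set $\tilde{f}_\epsilon := f \ast \phi_\epsilon$. Since $\phi_\epsilon \in L^\infty$ and $f \in L^1$, Young's inequality gives $\tilde{f}_\epsilon \in L^\infty$; combined with $\supp \tilde{f}_\epsilon \subset (-1/2-\epsilon,\, 1/2+\epsilon)$ this forces $\tilde{f}_\epsilon \in L^2$, and clearly $\int \tilde{f}_\epsilon = 1$. To restore the support constraint, I would rescale: let $\lambda := 1+2\epsilon$ and
\[ g_\epsilon(x) := \lambda\, \tilde{f}_\epsilon(\lambda x). \]
Then $g_\epsilon \in \mathcal{F} \cap L^2$ by construction, and a direct change of variables in $(g_\epsilon \ast g_\epsilon)(x) = \lambda\,(\tilde{f}_\epsilon \ast \tilde{f}_\epsilon)(\lambda x)$ yields
\[ \|g_\epsilon \ast g_\epsilon\|_2^2 \;=\; \lambda\, \|\tilde{f}_\epsilon \ast \tilde{f}_\epsilon\|_2^2. \]

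Finally, I would note $\tilde{f}_\epsilon \ast \tilde{f}_\epsilon = (f \ast f) \ast \psi_\epsilon$, where $\psi_\epsilon := \phi_\epsilon \ast \phi_\epsilon$ is itself a standard approximation of the identity (supported in $(-2\epsilon,2\epsilon)$, integrating to $1$). Since $f \ast f \in L^2(\mathbb{R})$ by assumption, the classical mollifier convergence theorem gives $(f \ast f) \ast \psi_\epsilon \to f \ast f$ in $L^2$ as $\epsilon \to 0$. Coupled with $\lambda \to 1$, this delivers $\|g_\epsilon \ast g_\epsilon\|_2 \to \|f \ast f\|_2$ and completes the proof. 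There is no real obstacle here: the hard existence work was already carried out in Proposition~\ref{exun}, and this lemma merely upgrades the regularity of near-optimizers. The only nonroutine point to flag is the dilation factor $\lambda^{1/2}$, which is harmless precisely because $\lambda \to 1$ as the mollification parameter vanishes.
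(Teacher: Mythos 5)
Your proposal is correct and follows essentially the same route as the paper: regularize by convolving with an approximate identity and then dilate by a factor tending to $1$ to restore the support constraint, which keeps the function in $\mathcal{F}$, puts it in $L^2$, and perturbs $\|f \ast f\|_2$ by an arbitrarily small amount. The only cosmetic differences are that the paper applies this to the extremizer $f^\Diamond$ with a rectangular kernel and verifies the norm control on the Fourier side via $\|\tilde{h}_\epsilon\|_\infty \leq 1$, whereas you apply it to arbitrary near-minimizers with a smooth bump and invoke the classical $L^2$ mollifier convergence theorem; both verifications are valid.
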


\begin{proof} Define the following family of rectangular functions for all $\epsilon >0$.
\[ h_\epsilon (x) = \begin{cases} 1/\epsilon & \text{if } -\frac{\epsilon}{2} < x < \frac{\epsilon}{2} \\ 1/(2\epsilon) & \text{if } x = \pm \frac{\epsilon}{2} \\ 0 & \text{otherwise.} \end{cases}\]
Note that the Fourier transform of $h_\epsilon$ is the sinc function
\[ \tilde{h}_\epsilon(y)  = \frac{\sin(\pi \epsilon y)}{\pi \epsilon y}.\]
Define
\[ f_\epsilon(x) = (1 + \epsilon)f^\Diamond \ast h_\epsilon((1+\epsilon)x), \quad \text{and consequently} \quad \tilde{f}_\epsilon(y) = \tilde{f}^\Diamond \tilde{h}_\epsilon\left( \frac{y}{1+\epsilon} \right).\]
It is easy to check that $f_\epsilon$ satisfies \eqref{qualcon}. Moreover, since $\| \tilde{f} \|_\infty \leq \| f^\Diamond \|_1$ we have by Parseval's theorem
\[ \| f_\epsilon\|_2^2 = \|\tilde{f}_\epsilon\|_2^2 = \int \left|  \tilde{f}^\Diamond \tilde{h}_\epsilon\left( \frac{y}{1+\epsilon} \right) \right|^2 \leq (1+\epsilon)\|\tilde{f}^\Diamond\|_\infty^2 \|h_\epsilon\|_2^2 = \frac{1+\epsilon}{\epsilon}\| f^\Diamond \|_1 ,\]
and so $f_\epsilon \in L^2$. Since $\| \tilde{h}_\epsilon\|_\infty \leq 1$ we have
\[ \int |\tilde{f}_\epsilon(y)|^4 \ dy \leq \int \left|\tilde{f}^\Diamond\left( \frac{y}{1+\epsilon}\right)\right|^4 \ dy = (1+\epsilon) \int |\tilde{f}^\Diamond(y)|^4 \ dy \leq (1+\epsilon)\mu_2^2.\]
This gives the result as $\epsilon$ can be made arbitrarily small.

\end{proof} 

Our final qualitative result shows that the three-fold convolution of the optimal function is constant on a large interval. The analogous discrete version of this result was proved in \cite{BG} using a similar argument.

\begin{lemma}\label{constantconv} Let $f^\Diamond\in \mathcal{F}$ denote the extremizing function for \eqref{qualcon}, and $F^\Diamond$ denote the extension of $f^\Diamond$ to a period 2 function, defined on $[-1,1]$ as $F^\Diamond(x) = \ind_{(-1/2,1/2)}(x)f^\Diamond(x)$. Then $F^\Diamond \ast F^\Diamond \ast F^\Diamond$ is constant on $(-1/2,1/2)$ with value $\mu_2^2/4$.

\end{lemma}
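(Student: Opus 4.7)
The plan is to use a first-variation (Lagrange multiplier) argument, the continuous analogue of Green's discrete argument in \cite{BG}. Since $f^\Diamond$ minimizes $J(f) := \|f \ast f\|_2^2$ over the affine set $\mathcal{F}$, any perturbation preserving the constraints $\int f = 1$ and $\supp f \subset [-1/2,1/2]$ must leave $J$ stationary to first order; this stationarity will force the triple convolution $f^\Diamond \ast f^\Diamond \ast f^\Diamond$ to be a.e.\ constant on $(-1/2, 1/2)$.

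Concretely, fix an arbitrary $\phi \in L^\infty(-1/2, 1/2)$ with $\int \phi = 0$ and set $f_\epsilon := f^\Diamond + \epsilon \phi \in \mathcal{F}$. Expanding $f_\epsilon \ast f_\epsilon = f^\Diamond \ast f^\Diamond + 2\epsilon(f^\Diamond \ast \phi) + \epsilon^2 (\phi \ast \phi)$, squaring, and integrating yields
\[
J(f_\epsilon) = J(f^\Diamond) + 4\epsilon \int (f^\Diamond \ast f^\Diamond)(x)(f^\Diamond \ast \phi)(x)\,dx + O(\epsilon^2).
\]
A Fubini interchange, together with the evenness of $f^\Diamond$ (noted immediately after Proposition~\ref{exun}), rewrites the linear term as $4\epsilon \int (f^\Diamond \ast f^\Diamond \ast f^\Diamond)(t)\,\phi(t)\,dt$. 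Minimality of $f^\Diamond$ forces this to vanish for every admissible $\phi$, so by standard duality $f^\Diamond \ast f^\Diamond \ast f^\Diamond$ equals some constant $c$ almost everywhere on $(-1/2, 1/2)$. To pin down $c$, multiply by $f^\Diamond$ (which is supported in $(-1/2, 1/2)$) and integrate; applying Parseval and using the reality and evenness of $\tilde{f^\Diamond}$ together with~\eqref{OFF} gives
\[
c \;=\; c\int_{-1/2}^{1/2} f^\Diamond(t)\,dt \;=\; \int (f^\Diamond \ast f^\Diamond \ast f^\Diamond)(t)\,f^\Diamond(t)\,dt \;=\; \int \tilde{f^\Diamond}(y)^4 \,dy \;=\; \mu_2^2.
\]
Finally, since $f^\Diamond \ast f^\Diamond \ast f^\Diamond$ is supported in $[-3/2, 3/2]$, no periodic wrap-around contributes on $(-1/2, 1/2)$, and translating the linear convolution to the normalized convolution on the period-$2$ torus $F^\Diamond \ast F^\Diamond \ast F^\Diamond$ picks up the factor of $1/4$, yielding the stated value $\mu_2^2/4$.

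The main technical hurdle is justifying the first-variation computation when $f^\Diamond$ is only known a priori to lie in $L^1$: the bilinear pairing $\int (f^\Diamond \ast f^\Diamond)(f^\Diamond \ast \phi)\,dx$ is finite because $f^\Diamond \ast f^\Diamond \in L^2$ (as $\|f^\Diamond \ast f^\Diamond\|_2 = \mu_2 < \infty$) and $f^\Diamond \ast \phi \in L^\infty$ with compact support, hence in $L^2$; similarly $f^\Diamond \ast f^\Diamond \ast f^\Diamond \in L^2$ by Young's inequality, which legitimizes the duality step. Beyond this and the bookkeeping of the factor $1/4$ between the linear and the normalized cyclic convolution, the argument is routine calculus of variations.
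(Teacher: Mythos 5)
Your proposal is correct and follows essentially the same route as the paper: a first-variation argument (using the evenness of $f^\Diamond$) forces $f^\Diamond \ast f^\Diamond \ast f^\Diamond$ to be constant on $(-1/2,1/2)$, and a Parseval-type identity equating $\int f^{\Diamond\ast 3} f^\Diamond$ with $\|f^\Diamond \ast f^\Diamond\|_2^2$ pins down the constant. The only cosmetic differences are that you work with the linear convolution on $\mathbb{R}$ and convert to the normalized period-$2$ convolution at the end (your factor $1/4$ bookkeeping matches the paper's normalization in \eqref{percon}), whereas the paper stays in the periodic setting and identifies the constant by testing stationarity against the specific perturbation $G = F^\Diamond - \ind_{(-1/2,1/2)}$.
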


\begin{proof} By Parseval's identity we have
\begin{equation}\label{percon} \int_{-1}^1 (F^\Diamond \ast F^\Diamond)^2(x) \ dx = 2\sum_{k \in \mathbb{Z}} |\hat{F^\Diamond}|^4 = \frac{1}{4} \| f^\Diamond \ast f^\Diamond\|_2^2 = \mu_2^2/4.\end{equation}
Let $G$ be any period 2 function such that $G(x) = 0$ for all $x \in [-1,1]\setminus [-1/2,1/2]$ and $\int_{-1}^1 G(x) \ dx = 0$. By optimality of $F^\Diamond$ we have

\begin{align*} \lim_{\epsilon \to 0} \ & \frac{1}{\epsilon} \Bigg( \int^1_{-1} ((F^\Diamond +\epsilon G )\ast (F^\Diamond + \epsilon G))^2(x) \ dx  -  \int^1_{-1} (F^\Diamond \ast F^\Diamond)^2(x) \ dx \Bigg) \\
& = 4 \int_{-1}^1 (F^\Diamond \ast F^\Diamond)(x) (F^\Diamond \ast G)(x) \ dx = 4\int_{-1/2}^{1/2} F^\Diamond \ast F^\Diamond \ast F^\Diamond(x) G(x) \ dx = 0. \\
\end{align*}

The second last equality above used that $F^\Diamond$ is even. Clearly the final equality can only hold if $F^\Diamond \ast F^\Diamond \ast F^\Diamond$ is constant on $(-1/2,1/2)$. We may choose $G = F^\Diamond -\ind_{(-1/2,1/2)}$, and apply the above to yield
\[ \int_{-1/2}^{1/2} F^\Diamond \ast F^\Diamond \ast F^\Diamond(x) \left(F^\Diamond -\ind_{[-1/2,1/2]}\right)(x) \ dx =0. \]
By \eqref{percon} we conclude
\[ \mu_2^2 = 4\int_{-1}^1 (F^\Diamond \ast F^\Diamond)^2(x) \ dx = 4\int_{-1/2}^{1/2}  F^\Diamond \ast F^\Diamond \ast F^\Diamond(x) \ dx .\]

\end{proof}

\section{Useful identities}

For ease of notation we will always use lowercase letters $f,g$ to denote functions on $[-1/2,1/2]$, or period 1 functions. We define the Fourier transform of $f \colon [-1/2,1/2] \to \mathbb{R}$ for $k \in \mathbb{Z}$ as 
\[ \hat{f}(k) = \int_{-1/2}^{1/2} e^{-2\pi i k x} f(x) \ dx.\]

We will use upper case letters $F,G$ to denote functions on $[-1,1]$ or period 2 functions. We define the Fourier transform of $F \colon [-1,1] \to \mathbb{R}$ for $k \in \mathbb{Z}$ as 
\[ \hat{F}(k) = \frac{1}{2}\int_{-1}^{1} e^{-\pi i k x} f(x) \ dx.\]

This is an abuse of the notation `\ $\hat{}$\ ', but which of the two above transforms is meant will be made clear by the letter case of the function notation. Let $f \in \mathcal{F}$ and define $F(x)$ be the extension of $f(x)$ to a function on $[-1,1]$ defined by setting $F(x)= 0$ outside of $[-1/2,1/2]$. Since $\supp(F) \subset [-1/2,1/2]$, the support of $F \ast F$ is contained in $[-1,1]$, hence 
\[ \widehat{F\ast F}(k) = \frac{1}{2} \int_{-1}^1 e^{-\pi i kx}F\ast F(x) \ dx =  \frac{1}{2} \int_{-1}^1 e^{-\pi i kx}\int_{-1/2}^{1/2} f(t)f(x-t) \ dt \ dx \]
\begin{equation*}\label{M2F} =\frac{1}{2}  \int_{-1/2}^{1/2} e^{-\pi i k t} F(t) \int_{-1}^1 e^{-\pi i k(x-t)}F(x-t) \ dx \ dt = 2 \hat{F}(k)^2.\end{equation*}

%Define Fourier transforms for $k \in \mathbb{Z}$ normalized as follows:

%\[ \hat{F}(k) = \frac{1}{2} \int_{-1}^1 e^{-\pi i kx}F(x) \ dx,  \quad \hat{M}(k) = \frac{1}{2} \int_{-1}^1 e^{-\pi i kx}M(x) \ dx.\]

We calculate the relationship between $\hat{F}$ and $\hat{f}$ below.
\begin{equation}\label{f2Ffour} \hat{F}(m) = \frac{1}{2} \sum_{k}\hat{f}(k)\int_{-1/2}^{1/2} e^{\pi i x(2k-m)} \ dx = \begin{cases} \frac{1}{2} \hat{f}(m/2) & \text{if } m \text{ is even} \\ (-1)^{(m+1)/2}\sum_{k\in \mathbb{Z}} \frac{\hat{f}(k)(-1)^k}{\pi  (2k-m)} & \text{if } m \text{ is odd.} \end{cases}\end{equation}
From Parseval's theorem and the above we obtain
\begin{equation*}\label{Meq0}\|F\ast F\|_2^2 =2 \sum_{k \in \mathbb{Z}} |\widehat{F\ast F}(k)|^2 = 8\sum_{k \in \mathbb{Z}} |\hat{F}(k)|^4.\end{equation*}
\begin{equation}\label{Meq1} =\frac{1}{2} \sum_{m \in \mathbb{Z}} |\hat{f}(m)|^4 +\frac{8}{\pi^4} \sum_{\substack{m \in \mathbb{Z} \\ m \text{ odd}}} \left| \sum_{k \in \mathbb{Z}} \frac{\hat{f}(k)(-1)^k}{2k-m} \right|^4.\end{equation}

Since $f(x)$ is real and even, we know that for all $k \in \mathbb{Z}$, $\hat{f}(k) = \hat{f}(-k) \in \mathbb{R}$.

\begin{lemma}\label{FFid} For all $f \in \mathcal{F}$ we have the identity 

\begin{equation*} \|f \ast f \|_2^2 =\frac{1}{2} + \sum_{m=1}^\infty \hat{f}(m)^4 + \frac{16}{\pi^4}\sum_{\substack{m \geq 1 \\ m \text{ odd}}} \left( \frac{1}{m} +2 \sum_{k=1}^\infty \frac{m\hat{f}(k)(-1)^k}{m^2-4k^2} \right)^4.  \end{equation*}

\end{lemma}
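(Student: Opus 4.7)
The plan is to start from equation \eqref{Meq1}, which the paper has already derived, and rewrite its right-hand side using the fact (stated just before the lemma) that $\hat{f}(k) = \hat{f}(-k) \in \mathbb{R}$ together with the normalization $\hat{f}(0) = \int f = 1$. Since $f$ is supported on $[-1/2,1/2]$, the convolution $f \ast f$ is supported on $[-1,1]$ and coincides pointwise on that interval with the periodized convolution $F \ast F$; hence $\|f \ast f\|_2^2$ equals the left-hand side of \eqref{Meq1}, and the problem reduces to massaging the right-hand side into the claimed form.

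For the first sum I would peel off the $m=0$ contribution, which is $\tfrac{1}{2}\hat{f}(0)^4 = \tfrac{1}{2}$, and pair $m$ with $-m$ for $m \geq 1$; this yields exactly $\tfrac{1}{2} + \sum_{m=1}^\infty \hat{f}(m)^4$, the first two terms of the identity. For the second sum I fix an odd $m \geq 1$ and rewrite the inner sum $\sum_{k \in \mathbb{Z}} \hat{f}(k)(-1)^k/(2k-m)$ by extracting the $k=0$ term (which contributes $-1/m$) and grouping each $k \geq 1$ with $-k$. Using $\hat{f}(-k) = \hat{f}(k)$ and $(-1)^{-k} = (-1)^k$, the paired terms combine via the partial-fraction identity
\[ \frac{1}{2k-m} + \frac{1}{-2k-m} = \frac{-2m}{m^2 - 4k^2}, \]
so that after factoring out a common minus sign the whole inner sum becomes
\[ -\left( \frac{1}{m} + 2\sum_{k=1}^\infty \frac{m\,\hat{f}(k)(-1)^k}{m^2 - 4k^2}\right), \]
which matches the parenthesized expression in the lemma once raised to the fourth power.

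Finally, to fold the outer sum $\sum_{m \text{ odd},\, m\in\mathbb{Z}}$ down to positive odd $m$, I would note that the change of variable $k \to -k$ inside the inner sum shows it flips sign under $m \to -m$, so its fourth power is symmetric in $m$. This doubling of the outer sum promotes the prefactor $8/\pi^4$ in \eqref{Meq1} to $16/\pi^4$, completing the identity. The entire argument is a bookkeeping exercise in signs, symmetries, and partial fractions; the only place I expect to need care is in the $k \leftrightarrow -k$ pairing, where the three facts $\hat{f}(-k) = \hat{f}(k)$, $(-1)^{-k} = (-1)^k$, and the partial-fraction identity above must be applied consistently to recover the denominator $m^2 - 4k^2$ with the correct overall sign.
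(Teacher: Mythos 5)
Your proposal is correct and follows essentially the same route as the paper: it substitutes the $k \leftrightarrow -k$ symmetrization of the inner sum (using $\hat{f}(0)=1$ and $\hat{f}(-k)=\hat{f}(k)$) into \eqref{Meq1}, with the $m \leftrightarrow -m$ symmetry converting $8/\pi^4$ into $16/\pi^4$; the overall sign you track via the denominator $2k-m$ versus the paper's $m-2k$ is immaterial under the fourth power. The paper's proof is just a terser statement of the same bookkeeping.
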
 

\begin{proof} Since $\hat{f}(0) = 1$, we have for all odd $m \in \mathbb{Z}$:
\[ \sum_{k\in \mathbb{Z}} \frac{\hat{f}(k)(-1)^k}{m-2k} = \frac{1}{m} + 2\sum_{k=1}^\infty \frac{m\hat{f}(k)(-1)^k}{m^2-4k^2}. \]
Substituting the above into \eqref{Meq1} gives the result. 

\end{proof}

We are unable to analytically determine the $\hat{f}(k)$ such that $\| f \ast f\|_2$ is minimized. In the following section we will use Lemma~\ref{FFid} together with a convex program to provide upper bounds on $\mu_2$ as well as an assignment of $\hat{f}(k)$ that is very close to optimal. The following lemma gives a method of obtaining strong lower bounds, from good $f \in \mathcal{F}$ with small $\|f \ast f\|_2$, i.e. good lower bounds can be found from good upper bound constructions.

\begin{lemma}\label{holdlem} Let $f,g$ be periodic real functions with period 1, such that $\int_{-1/2}^{1/2} f  = 1$ and $\int_{-1/2}^{1/2} g  = 2$. Define $F,G \colon [-1,1] \to \mathbb{R}$ by 
\[ F(x) = \begin{cases} f(x) & \text{for } x \in [-1/2,1/2] \\ 0 & \text{otherwise} \end{cases},\qquad G(x) = \begin{cases} 1 & \text{for } x \in [-1/2,1/2] \\ 1-g(x) & \text{otherwise.} \end{cases}\]
Then 
\begin{equation}\label{LBholder} 1/2 = \sum_{k \neq 0} \hat{F}(k) \overline{\hat{G}(k)} \leq \left(\sum_{k \neq 0} |\hat{F}(k) |^4\right)^{1/4}\left(\sum_{k \neq 0} |\hat{G}(k) |^{4/3}\right)^{3/4},\end{equation}
and the inequality is equality if and only if $f = f^\Diamond$ and $\hat{g}(k) = \frac{2}{1-2\mu_2^2 }\hat{f^\Diamond}(k)^3$ for all $k \neq 0$  where $f^\Diamond \in \mathcal{F}$ is the extremizer. 

\end{lemma}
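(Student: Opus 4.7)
The plan is to split the statement into the identity $1/2 = \sum_{k \neq 0} \hat F(k) \overline{\hat G(k)}$, the Hölder bound, and the equality analysis.

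For the identity, I would invoke Parseval's theorem for period-two Fourier series, $\sum_k \hat F(k)\overline{\hat G(k)} = \frac{1}{2}\int_{-1}^1 F\overline{G}\,dx$, and exploit the structure of the two functions. The right-hand integral collapses to $\frac{1}{2}\int_{-1/2}^{1/2} f = 1/2$ because $F$ is supported on $[-1/2, 1/2]$ where $G \equiv 1$. A direct computation shows $\hat G(0) = \frac{1}{2}\int_{-1}^1 G = 0$: the integral over $[-1/2, 1/2]$ contributes $1$, while the integral of $1 - g$ over $[-1, -1/2]\cup[1/2, 1]$ contributes $1 - 2 = -1$, using that $g$ is period one with $\int_{-1/2}^{1/2} g = 2$ forcing $\int g = 2$ over any other period-one set. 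Subtracting the vanishing $k=0$ Fourier coefficient gives the identity, and the inequality then follows by the triangle inequality combined with Hölder's inequality in the dual pair $(\ell^4, \ell^{4/3})$ on $\mathbb Z\setminus\{0\}$.

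For the equality case, I examine when both Hölder and the triangle inequality are tight. The latter, combined with the sum being $1/2 > 0$ real, forces each summand $\hat F(k)\overline{\hat G(k)}$ to be nonnegative real; the former forces $|\hat G(k)|^{4/3} \propto |\hat F(k)|^4$. Combined, $\hat G(k) = \lambda |\hat F(k)|^2 \hat F(k)$ for some $\lambda > 0$ and all $k \neq 0$. Recognizing this expression as $\frac{\lambda}{4}\widehat{F\ast F\ast \tilde F}(k)$ (with $\tilde F(x) = F(-x)$, using the convolution formulas from Section~3), I deduce that $G$ and $\frac{\lambda}{4}(F\ast F\ast \tilde F)$ differ by a constant, so the structural requirement $G \equiv 1$ on $[-1/2, 1/2]$ forces $F\ast F\ast \tilde F$ to be constant on that interval.

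The main obstacle is deducing $F = F^\Diamond$ from this constancy. The variational computation in the proof of Lemma~\ref{constantconv} shows that constancy of $F\ast F\ast \tilde F$ on $(-1/2, 1/2)$ makes the first variation of $\Phi(F) := \|F\ast F\|_2^2 = 8\sum_k|\hat F(k)|^4$ vanish along every mean-zero perturbation supported in $[-1/2, 1/2]$. Since $\Phi$ is a sum of strictly convex functions of the Fourier coefficients, it is strictly convex on the affine slice $\hat F(0) = 1/2$, so its unique critical point is its unique minimizer, namely $F^\Diamond$ by Proposition~\ref{exun}. The constant $\lambda$ is then pinned down by $\lambda\sum_{k\neq 0}|\hat F^\Diamond(k)|^4 = 1/2$, which together with $\sum_k|\hat F^\Diamond(k)|^4 = \mu_2^2/8$ and $\hat F^\Diamond(0) = 1/2$ gives $\lambda = 8/(2\mu_2^2 - 1)$. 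Translating $\hat G$ back to $\hat g$ via the relation $\hat G(2m) = -\frac{1}{2}\hat g(m)$ for $m \neq 0$ (from the period-two definition of $\hat G$), together with $\hat F^\Diamond(2m) = \frac{1}{2}\hat{f^\Diamond}(m)$ from \eqref{f2Ffour}, recovers the stated formula $\hat g(k) = \frac{2}{1-2\mu_2^2}\hat{f^\Diamond}(k)^3$.
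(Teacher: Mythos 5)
Your handling of the identity and of the inequality is exactly the paper's: Plancherel on the period-2 interval, the observation $\hat G(0)=0$, then H\"older in the $(\ell^4,\ell^{4/3})$ pairing. Where you genuinely diverge is the equality case, and there the two arguments are complementary rather than equivalent. You prove the \emph{necessity} half: tightness of the triangle and H\"older inequalities forces $\hat G(k)=\lambda|\hat F(k)|^2\hat F(k)$ with $\lambda>0$; identifying this with $\tfrac{\lambda}{4}\widehat{F\ast F\ast\tilde F}(k)$ (where $\tilde F(x)=F(-x)$) shows $F\ast F\ast\tilde F$ is constant on $(-1/2,1/2)$, and the vanishing first variation of $F\mapsto 8\sum_k|\hat F(k)|^4$ together with convexity identifies $F$ as the minimizer, hence $F=F^\Diamond$ by Proposition~\ref{exun}; normalizing $\lambda$ and using $\hat g(k)=-2\hat G(2k)$ recovers the stated formula. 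This is sound (strict convexity is not even needed --- convexity plus the uniqueness in Proposition~\ref{exun} suffices, and your use of $\tilde F$ avoids the evenness shortcut of Lemma~\ref{constantconv}; the analytic informality in the first-variation step is at the same level as the paper's own argument). It supplies the ``only if'' direction, which the paper's proof in fact leaves implicit.

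The gap is the converse, which is the half the paper does prove and the half it actually uses downstream (the existence of a $g$ making the bound tight feeds the lower-bound computation and the replacement of $\gamma(g)$ by $2\mu_2^2-1$ in Theorem~\ref{BGthm}): you never verify that the pair $f=f^\Diamond$, $\hat g(k)=\frac{2}{1-2\mu_2^2}\hat{f^\Diamond}(k)^3$ really attains equality. Knowing $\hat g$ only pins down the even-indexed coefficients $\hat G(2k)=-\tfrac12\hat g(k)$; equality also requires the odd-indexed $\hat G(m)$ to be proportional to $\hat{F^\Diamond}(m)^3$, and that does not follow from the formula for $\hat g$ alone. The paper closes exactly this point by constructing $G^\Diamond=\frac{8}{2\mu_2^2-1}F^\Diamond\ast F^\Diamond\ast F^\Diamond-\frac{1}{2\mu_2^2-1}$, using Lemma~\ref{constantconv} to see it equals $1$ on $(-1/2,1/2)$, reading off all of its Fourier coefficients, and recovering $g$ via \eqref{f2g}. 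Your argument is repaired cheaply along the same lines: check $\hat G^\Diamond(0)=0$, note that the period-1 function describing $G^\Diamond$ outside $[-1/2,1/2]$ has the same Fourier coefficients (including at $0$) as your $g$, conclude $G=G^\Diamond$, and hence that all coefficients, odd ones included, have the proportional form, so both inequalities are tight. Without some such step the ``if'' direction of the lemma, as stated, remains unproved.
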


\begin{proof} By Plancherel's theorem we have 
\[ 1 = \int_{-1}^1 F(x)\overline{G(x)} \ dx = \langle F,G\rangle = 2 \langle \hat{F}, \hat{G} \rangle = 2\sum_{k \in \mathbb{Z}} \hat{F}(k)\overline{\hat{G}(k)}.\]
Since $\hat{G}(0) = 0$, by applying H\"older's inequality we conclude \eqref{LBholder}. Let $F^\Diamond$ be viewed as a period 2 function on $\mathbb{R}$. Then by Lemma~\ref{constantconv} we see that 
\[ G^\Diamond(x) = \frac{8}{2\mu_2^2-1} F^\Diamond \ast F^\Diamond \ast F^\Diamond(x) - \frac{1}{2\mu_2^2-1},\]
is a function of the required form. Moreover, for all $k \neq 0$ we have $\hat{G^\Diamond} (k) = \frac{8}{2\mu_2^2-1} \hat{F^\Diamond}^3(k)$, and so H\"older's inequality will be tight. We can determine the function $g$ that gives rise to $G^\Diamond$. For all $k \neq 0$ we have
\[ \hat{G}^\Diamond(2k) = \frac{1}{2} \int_{-1}^1 e^{-2\pi i kx} G^\Diamond(x) \ dx = -\frac{1}{2} \int_{1/2}^{3/2} e^{-2\pi i kx} g(x) \ dx = -\frac{1}{2} \hat{g}(k) .\]
Consequently the $g(x)$ corresponding to $G^\Diamond$ have the Fourier coefficients 
\begin{equation}\label{f2g} \hat{g}(k) = -2 \hat{G}^\Diamond(2k) = \frac{-16}{2\mu_2^2-1} \hat{F^\Diamond}^3(2k) = \frac{2}{1-2\mu_2^2} \hat{f}^\Diamond(k)^3.\end{equation}

\end{proof}

The use of the Lemma~\ref{holdlem} is that if we obtain a $f$ close to optimal, then not only do we have a good upper bound estimate of $\mu_2^2$, but also a strong lower bound estimate.

\section{Quantitative results}\label{progsec}

In this section we describe a convex program used to approximate the optimal solution of \eqref{objfunc} with finitely many variables. Our primal program is the following.

%Let $T$ and $R$ be positive integers. %The variables of our program are $\{c_k,w_k,x_k\}_{k=1}^T, \{y_m,z_m,\epsilon_m\}_{m=1}^R$. 

\begin{align}
\textsc{Input: } & R,T \in \mathbb{N} \nonumber  \\
\textsc{Variables: }&  \{f_k,w_k,x_k\}_{k=1}^T, \{y_m,z_m\}_{m=1}^R\nonumber \\
\textsc{Minimize: } & \frac{1}{2} + \sum_{m=1}^T x_k + \frac{16}{\pi^4}\sum_{m=1}^R z_m \nonumber \\
\textsc{Subject to: }  &w_k \geq f_k^2, x_k \geq w_k^2; \quad 1 \leq k \leq T,\nonumber \\
&y_m \geq \left(\frac{1}{2m-1} + 2\sum_{k=1}^T \frac{(2m-1)f_k(-1)^k}{(2m-1)^2-4k^2}\right)^2  ; \quad 1 \leq m \leq R,\nonumber \\
&z_m \geq y_m^2  ; \quad 1 \leq m \leq R.\nonumber \\
%&|\epsilon_m|  \leq \frac{2m}{T^{5/4}}; \quad 1 \leq m \leq R.\label{epsiloncon} 
\end{align}

For any $R,T \in \mathbb{N}$, let $\mathcal{O}(R,T)$ be the optimum of the above program. We remark that the reason for the `redundant' variables $\{w_k,x_k\}_{k=1}^T$ and $\{y_m,z_m\}_{m=1}^R$ is to demonstrate that the program is easily implemented as a quadratically constrained linear program. For any $T \in \mathbb{N}$, let $\mathcal{F}_T \subset \mathcal{F}$ be functions that are degree at most $T$ in their Fourier series expansion, i.e. $f \in \mathcal{F}_T$ implies $\hat{f}(k) = 0$ for $|k|>T$. 

%simply so that the program has a linear objective and convex quadratic constraints. %This form of program is easily converted to a SOCP. 

\begin{prop}\label{progconv} For all $R,T \in \mathbb{N}$ the following chain of inequalities holds. 

\[ \min_{f \in \mathcal{F}_T} \| f \ast f \|_2^2 - 5T^{-1/3} \leq \mu_2^2 \leq \min_{f \in \mathcal{F}_T} \| f \ast f \|_2^2 \leq \mathcal{O}(T,R) + 5(T/R)^3.\]

Moreover, $|\mathcal{O}(R,\sqrt{R}) - \mu_2^2| < 10R^{-1/6}.$

\end{prop}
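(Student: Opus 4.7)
The plan is to establish the three inequalities of the chain separately; the final bound $|\mathcal{O}(R,\sqrt R) - \mu_2^2| < 10R^{-1/6}$ follows by specializing $T = \sqrt R$, since $5(T/R)^3 = 5R^{-3/2}$ is dwarfed by $5T^{-1/3} = 5R^{-1/6}$. The middle inequality $\mu_2^2 \le \min_{f \in \mathcal{F}_T}\|f\ast f\|_2^2$ is immediate from $\mathcal{F}_T \subset \mathcal{F}$.

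For the right inequality, I would interpret the primal program through Lemma~\ref{FFid}: at its optimum the slack variables saturate ($x_k = f_k^4$, $z_m = S_{2m-1}^4$ where $S_m = \tfrac{1}{m}+2\sum_{k=1}^T \tfrac{m f_k (-1)^k}{m^2-4k^2}$), so $\mathcal{O}(R,T)$ equals $\|f\ast f\|_2^2$ for the optimizing trig polynomial $f$, minus the tail $\tfrac{16}{\pi^4}\sum_{\text{odd }m \ge 2R+1} S_m^4$. It therefore suffices to bound this tail by $5(T/R)^3$. Using the feasible point $f_k \equiv 0$ (i.e.\ $f \equiv 1$, giving $\|f\ast f\|_2^2 = 2/3$) to bound $\mathcal{O}(R,T) \le 2/3$ and hence $\sum_{k=1}^T f_k^4 \le 1/6$, two successive applications of Cauchy--Schwarz give $\sum_{|k|\le T}|f_k| = O(T^{3/4})$. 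For $m \ge 2R+1$ with $R \ge 2T$, the triangle inequality gives $|S_m| \le \sum_{|k|\le T}|f_k|/(m-2T) = O(T^{3/4}/m)$; raising to the fourth power and summing yields $\sum_{\text{odd }m \ge 2R+1} S_m^4 = O(T^3/R^3)$, with the constant 5 recovered by bookkeeping. When $R < 2T$ one has $\mathcal{O}(R,T) + 5(T/R)^3 \ge 1/2 + 5/8 > 2/3 \ge \min_{f\in\mathcal{F}_T}\|f\ast f\|_2^2$ automatically, so the inequality is vacuous.

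For the left inequality, I would construct an explicit test function in $\mathcal{F}_T$ by smoothing and truncating $f^\Diamond$. For a parameter $\delta > 0$, let $\phi_\delta$ be the triangular bump of width $\delta$ with $\int\phi_\delta = 1$, whose Fourier transform $\tilde\phi_\delta(y) = \mathrm{sinc}^2(\pi\delta y)$ satisfies $|\tilde\phi_\delta| \le 1$ and decays like $(\delta y)^{-2}$. Mimicking the rescaling in the proof of Lemma~\ref{L2lem}, set $g_\delta(x) = (1+2\delta)(f^\Diamond \ast \phi_\delta)((1+2\delta)x)$, so that $g_\delta \in L^2 \cap \mathcal{F}$ and $\|g_\delta \ast g_\delta\|_2^2 \le (1+2\delta)\mu_2^2$. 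Truncate the Fourier series of $g_\delta$ to degree $T$, producing $f_T \in \mathcal{F}_T$ with $\|f_T - g_\delta\|_2^2 = \sum_{|k|>T}|\hat g_\delta(k)|^2 = O(\delta^{-4}T^{-3})$ from the quadratic decay of $\hat g_\delta$. Since $\|g_\delta\|_1 \le \|f^\Diamond\|_1 \|\phi_\delta\|_1 < \infty$, Young's convolution inequality applied to $(f_T - g_\delta)\ast(f_T + g_\delta)$ gives $\|f_T\ast f_T - g_\delta\ast g_\delta\|_2 = O(\delta^{-2}T^{-3/2})$, and squaring yields $\|f_T\ast f_T\|_2^2 \le \mu_2^2 + O(\delta + \delta^{-2}T^{-3/2})$. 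Choosing $\delta = T^{-1/2}$ balances the two error contributions to give overall error $O(T^{-1/2})$, which is at most $5T^{-1/3}$ once $T$ is beyond an easily verifiable threshold; smaller $T$ is handled by the trivial bound $\min_{f\in\mathcal{F}_T}\|f\ast f\|_2^2 \le 2/3$.

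I expect the main technical obstacle to be constant tracking rather than any conceptual hurdle: both the Cauchy--Schwarz cascade in the tail estimate and the Young-plus-smoothing chain in the approximation argument absorb several $O$-constants that must be bundled carefully to match the stated factor $5$. The boundary regimes ($R$ close to $T$, or very small $T$) must be split off and dispatched by the observations that $\mathcal{O}(R,T) \ge 1/2$ and $\min_{\mathcal{F}_T}\|f\ast f\|_2^2 \le 2/3$, which together render the claimed inequalities trivial outside the main range.
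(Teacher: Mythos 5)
Your middle inequality, your treatment of the right-hand inequality (bounding the dropped tail $\frac{16}{\pi^4}\sum_{\text{odd } m\geq 2R+1}S_m^4$ via $\sum_k f_k^4\leq 1/6$ and a Cauchy--Schwarz/H\"older bound on $\sum_k|f_k|$), and the specialization $T=\sqrt{R}$ all follow the paper's argument in substance; your extra observation that the regime $R<2T$ is vacuous because $\mathcal{O}\geq 1/2$ and $\min_{\mathcal{F}_T}\|f\ast f\|_2^2\leq 2/3$ is a reasonable supplement, since the paper only argues under $R\geq 2T$. The genuine gap is in the left-hand inequality. Both of your error estimates carry constants that you never control: the truncation bound $\sum_{|k|>T}|\hat{g}_\delta(k)|^2=O(\delta^{-4}T^{-3})$ has an implicit factor $\|\tilde{f}^\Diamond\|_\infty^2$, and the Young step $\|(f_T-g_\delta)\ast(f_T+g_\delta)\|_2\leq\|f_T-g_\delta\|_2\,\|f_T+g_\delta\|_1$ has an implicit factor of order $\|f^\Diamond\|_1$. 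Membership in $\mathcal{F}$ gives only $\int f^\Diamond=1$; nothing in your argument bounds $\|f^\Diamond\|_1$, nor $\|\tilde{f}^\Diamond\|_\infty\leq\|f^\Diamond\|_1$, by any explicit number. Hence your conclusion is $\min_{f\in\mathcal{F}_T}\|f\ast f\|_2^2\leq\mu_2^2+C(f^\Diamond)\,T^{-1/2}$ with an unknown, possibly enormous constant, and the claims that ``the constant 5 is recovered by bookkeeping'' and that the remaining $T$ lie below ``an easily verifiable threshold'' are not justified: the fallback $\min_{\mathcal{F}_T}\|f\ast f\|_2^2\leq 2/3$, $\mu_2^2\geq 1/2$ only covers $T$ with $5T^{-1/3}\geq 1/6$, i.e.\ $T\leq 27000$, not an unknown initial segment depending on $f^\Diamond$.

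This is exactly the issue the paper dispatches first: using the compact support of $f^\Diamond$ it writes $\tilde{f}^\Diamond(y)=\int\tilde{f}^\Diamond(z)\frac{\sin\pi(z-y)}{\pi(z-y)}\,dz$ and applies H\"older together with the a priori bound $\|\tilde{f}^\Diamond\|_4^4=\mu_2^2\leq 2/3$ to get the explicit estimate $\|\tilde{f}^\Diamond\|_\infty\leq 3$ (inequality \eqref{ftsup}); it then keeps the entire error analysis on the Fourier side, passing from an $L^2$ to an $L^4$ bound via $\|h\|_4^4\leq\|h\|_\infty^2\|h\|_2^2$, so that $\|f^\Diamond\|_1$ never appears. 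If you import that sup-norm bound (and replace the Young step by the analogous Fourier-side interpolation), your triangular-kernel variant does go through with explicit constants, and in fact with the better exponent $T^{-1/2}$ rather than $T^{-1/3}$, since $\tilde{\phi}_\delta$ decays quadratically while the paper's rectangular kernel only gives linear decay of $\tilde{h}_\epsilon$. As written, however, the explicit ``$5T^{-1/3}$ for all $T$'' (and hence the quantitative ``moreover'' clause) is not proved, so this step needs repair.
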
 

\begin{proof} Fix arbitrary $R \geq 2T \in \mathbb{N}$. The middle inequality is trivial. We prove the first inequality first. Let $f^\Diamond \in \mathcal{F}$ be the extremizer, and define for all $0<\epsilon<1/4$,
\[ f_\epsilon(x) = (1 + \epsilon)f^\Diamond \ast h_\epsilon((1+\epsilon)x) \in \mathcal{F},\]
with $h_\epsilon$ as in Lemma~\ref{L2lem}. Note that since $\epsilon<1/4$ we can consider $h_\epsilon$ as a function on $(-1/2,1/2)$ with mass 1. Let $f_{\epsilon,T} \in \mathcal{F}_T$ be the degree $T$ Fourier approximation of $f_\epsilon$, i.e. 
\[ f_{\epsilon,T} (x)= \sum_{|k|\leq T} \hat{f}_\epsilon(k) e^{2\pi i k x} .\]
By Fourier inversion we have the estimate for all $y \in \mathbb{R}$:
\begin{align} |\tilde{f}^\Diamond(y)| & = \int_{-1/2}^{1/2} e^{-2 \pi i x y} \int_{-\infty}^{\infty} \tilde{f}^\Diamond(z)e^{2\pi i xz} \ dz \ dx\nonumber \\
& = \int_{-\infty}^\infty \tilde{f}^\Diamond(z) \int_{-1/2}^{1/2} e^{2\pi i x(z-y)} \ dx \ dz \nonumber\\
& = \int_{-\infty}^\infty \tilde{f}^\Diamond(z) \frac{\sin(\pi(z-y))}{\pi(z-y)} \ dz \nonumber\\
& \leq \|\tilde{f}^\Diamond\|_4 \cdot \|\sin(\pi z)/(\pi z)\|_{4/3} \leq 3,\label{ftsup} 
\end{align}
where in final inequality we used that $\|\tilde{f}^\Diamond\|_4 < 2/3$. Hence $\|\tilde{f}^\Diamond\|_\infty \leq 3$. We bound the $L^2(-1/2,1/2)$ norm of $f_{\epsilon,T}-f_\epsilon$ by Parseval's theorem.
\begin{align} \| f_{\epsilon,T} - f_{\epsilon} \|_2^2 &= \sum_{|k|>T} |\hat{f}_\epsilon(k)|^2= \sum_{|k|>T} |\tilde{f}_\epsilon(k)|^2 \nonumber\\
&= \sum_{|k|>T} |\tilde{f}^\Diamond(k/(1+\epsilon))\tilde{h}_\epsilon(k/(1+\epsilon))|^2 \nonumber\\
& \leq \|\tilde{f}^\Diamond\|_\infty \sum_{|k|>T} \left(\frac{1+\epsilon}{\pi \epsilon k}\right)^2 \leq \frac{1}{\epsilon^2 T}.\label{feptdif}
\end{align}
Combining \eqref{ftsup} and \eqref{feptdif} gives the estimate 
\[ \| \tilde{f}_{\epsilon,T} - \tilde{f}_\epsilon \|_4^4 \leq 36\| \tilde{f}_{\epsilon,T} - \tilde{f}_\epsilon \|_2^2 \leq \frac{36}{\epsilon^2 T}. \]
Consequently 
\[ \| \tilde{f}_{\epsilon,T} \|_4^4 \leq \| \tilde{f}_\epsilon \|_4^4 + \frac{36}{\epsilon^2 T} \leq (1+\epsilon)\mu_2^2 + \frac{36}{\epsilon^2 T}. \]
By choosing $\epsilon = (36/(\mu_2^2 T))^{1/3}$ to minimize the error, we obtain $\| \tilde{f}_{\epsilon,T} \|_4^4 \leq \mu_2^2 + 5/T^{1/3}$. We now prove the third inequality. Let $\{f_k\}_{k=1}^T$ be the solution to the program with inputs $R,T$. Define $f_P(x) = \sum_{|k| \leq T} e^{2 \pi i k x} f_{|k|}$. We have
 \begin{equation}\label{Ttoprog} \min_{f \in \mathcal{F}_T} \| f \ast f \|_2^2 \leq \|f_P \ast f_P\|_2^2 \leq \mathcal{O}(R,T) + \frac{16}{\pi^4}\sum_{m=R+1}^\infty \left( \frac{1}{2m-1} +2 \sum_{k=1}^T \frac{(2m-1)f_k(-1)^k}{(2m-1)^2-4k^2} \right)^4.\end{equation}
 For all $m \geq R+1$ and $1 \leq k \leq T$ we have $(2m-1)^2-4k^2 \geq 2m^2$. We can bound the inside sum above by H\"older's inequality. 
 \[ \left|\sum_{k=1}^T \frac{(2m-1)f_k(-1)^k}{(2m-1)^2-4k^2}\right| \leq (2m-1) \left( \sum_{k=1}^Tf_k^4 \right)^{1/4} \left( \sum_{k=1}^T(2m^2)^{-4/3} \right)^{3/4} \leq T^{3/4}/m. \]
 Substituting this estimate into \eqref{Ttoprog} we obtain
 \[ \min_{f \in \mathcal{F}_T} \| f \ast f \|_2^2 \leq \mathcal{O}(R,T) + \frac{16}{\pi^4}\sum_{m=R+1}^\infty \left(3T^{3/4}/m \right)^4 \leq \mathcal{O}(R,T)+5(T/R)^3. \]
By setting $T = \sqrt{R}$ we see that $\mathcal{O}(R,T) - 5R^{-1/6} \leq \mu_2^2 \leq \mathcal{O}(R,T) + 5R^{-1/6}$. This concludes the proof.

 \end{proof}

 As a consequence of Proposition~\ref{progconv} we see that the optimum of our program will converge to $\mu_2^2$, thereby giving both upper and lower bounds for $\mu_2^2$. The rate of convergence we have proved is not particularly fast, and so to obtain strong lower bounds, we will use the solution of our program in tandem with Lemma~\ref{holdlem}. \\

%\noindent {\bf \large Computational results} 

\subsection{Computational results}

In Proposition~\ref{progconv} we proved the convergence of the optimum $\mathcal{O}(R,\sqrt{R})$ to $\mu_2^2$. The convergence is not fast enough to generate good numerical results. To find good choices for the Fourier coefficients $\hat{f}(k)$, we run the convex program when $T/R$ is large. Our results can be improved slightly with more computation time, but our best current function comes from using our convex program with $R = 4000$ and $T = 30000$. We used IBM's CPLEX software on a personal computer to determine the optimal solution, and the full assignment of $\{f_k\}_{k=1}^{30000}$ is available upon request. The first 20 values of $f_k$ are displayed below. 

\begin{table}[H]\centering
\small{
 \begin{tabular}{| c |c | } 
 \hline
          $c_k$ : $1 \leq k \leq 10$ & $c_k$ : $11 \leq k \leq 20$ \\
 \hline
-0.297647135 & -0.094224143\\
\hline
0.216257252 & 0.090249054\\
\hline
-0.178150116 & -0.086738237\\
\hline
0.154960786 & 0.083607805\\
\hline
-0.138963721 & -0.080793794\\
\hline
0.12707629 & 0.07824618\\
\hline
-0.117795585 & -0.075925462\\
\hline
0.11029022 & 0.073799849\\
\hline
-0.104058086 & -0.071843457\\
\hline
0.09877573 & 0.0700349\\
\hline
\end{tabular}}
\caption{First values of $\{f_k\}$ for almost optimal $f(x)$.}
\label{coscoef}
\end{table}

In Figure~\ref{plot} we display the degree 20 trigonometric polynomial described in Table~\ref{coscoef}.

\begin{figure}[H]
 \centering
    \includegraphics[width=0.7\textwidth]{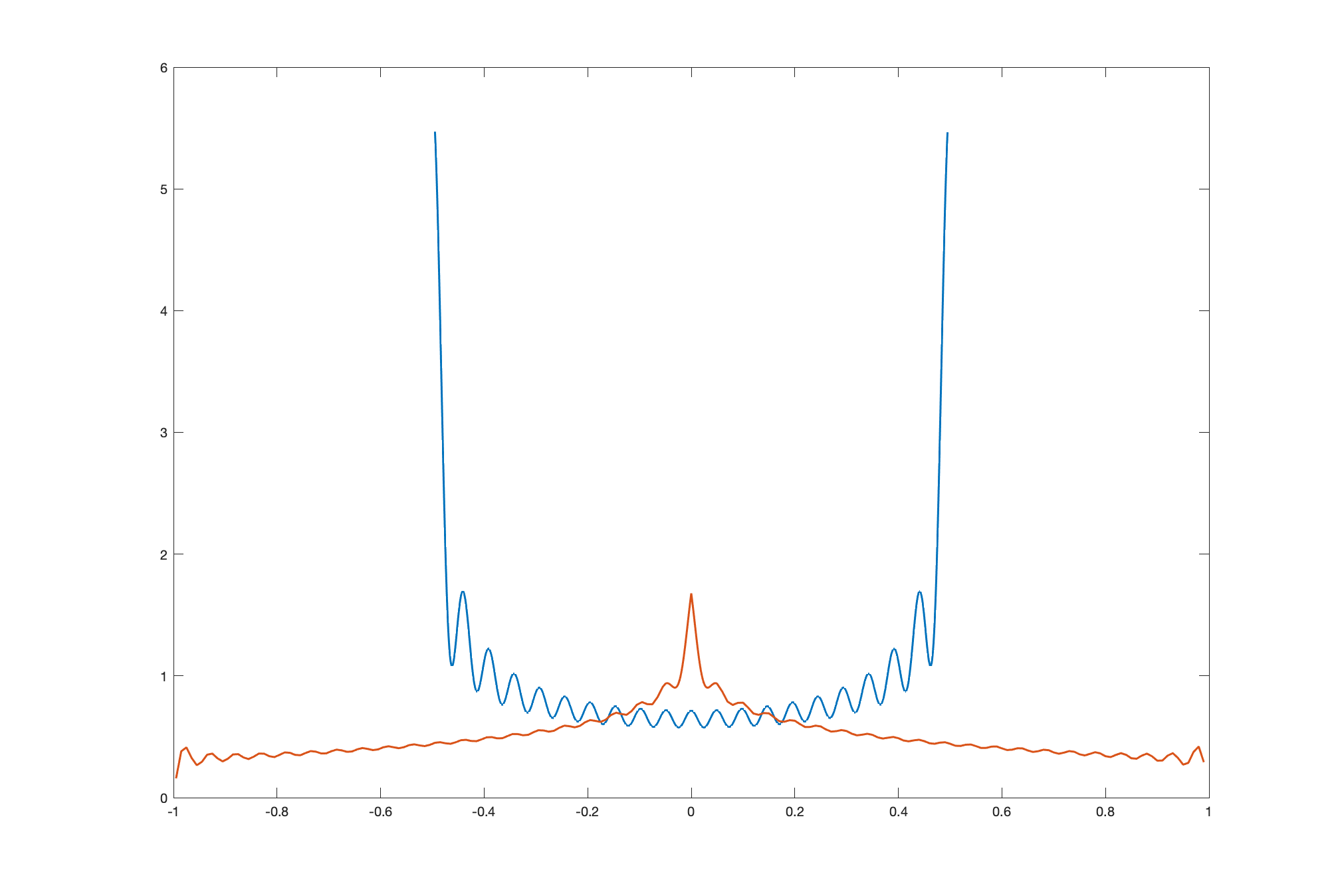}
    \caption{Degree 20 trigonometric polynomial shown in blue and its autoconvolution shown in orange. }
  \label{plot}
\end{figure}

For the remainder of this section, let $T = 30000$, $R = 4000$ and $f_P(x) = 1+\sum_{|k| \leq T} f_{|k|} e^{2\pi i kx}$, where $f_k$ is the solution partially stated above. Also let $F_P$ be the extension of $f_P$ to $[-1,1]$, defined to be zero outside of $[-1/2,1/2]$. In the following two subsections we calculate upper and lower bounds for $\mu_2^2$ thereby proving Theorem~\ref{MT}. We export our computed solution $\{f_k\}_{k=1}^{30000}$ to Matlab and use Variable-Precision Arithmetic' operations to avoid floating-point rounding errors on the order of precision stated in our theorem, we used the default of 32 significant digits. In the calculation of our upper and lower bounds, we will use the following quantities related to $\{f_k\}_{k=1}^{30000}$. 
\begin{equation}\label{fpstat} \sum_{k=1}^T |f_k| \approx 118.753760895, \quad \sum_{k=1}^T |f_k|^3 \approx 0.074874022, \quad  \sum_{k=1}^T k^2|f_k|^3 <157091.\end{equation}

\subsection{Computing an upper bound}

We want to estimate $ \| f_P \ast f_P\|_2^2$ from above. We will take advantage of the fact that the Fourier coefficients $\hat{F}_P(k)$ will decay quickly. From \eqref{f2Ffour} we see that $\hat{F}_P(2m) = 0$ for all $|m| \geq T+1$. Also, for odd $|m| \geq 4T$ we have 
\begin{equation}\label{Fdecay} |\hat{F}_P(m)| = \bigg|\sum_{k\in \mathbb{Z}} \frac{\hat{f}_P(k)(-1)^k}{\pi  (2k-m)} \bigg| \leq \frac{2}{m \pi} \sum_{|k| \leq T} |\hat{f}_P(k)|.\end{equation}
From \eqref{fpstat} we obtain the estimate $|\hat{F}_P(m)| < 152/m$. This gives the bound on the tail sum for all $N \in \mathbb{N}$:
\[ \sum_{m > N} |\hat{F}(2m-1)|^4 < 152^4\int_N^\infty (2x-1)^{-4} = 152^4(2N-1)^{-3}/6.  \]
From \eqref{Meq1} we have for all $N \geq 2T$:
\begin{align*}  \| f_P \ast f_P\|_2^2 &= 8 \sum_{m \in \mathbb{Z}} |\hat{F}_P(m)|^4  \leq  \frac{1}{2} + \sum_{m=1}^T \hat{f}_P(m)^4\\
& +\frac{16}{\pi^4}\sum_{m=1}^N \left( \frac{1}{2m-1} +2(2m-1) \sum_{k=1}^T \frac{\hat{f}_P(k)(-1)^k}{(2m-1)^2-4k^2} \right)^4 + 152^4(2N-1)^{-3}/3 .\end{align*}
The choice of $N = 10^6$ gives the estimate $\| f_P \ast f_P\|_2^2 \leq 0.574643711$.

\subsection{Computing a lower bound}

We use Lemma~\ref{holdlem} to compute a good lower bound. To do this, we need to find a good choice of $g(x)$ on $[-1/2,1/2]$. Equation~\eqref{f2g} suggests that a good choice $g_P$ should have the Fourier coefficients $\hat{g}_P(0) = 2$ and
\[ \hat{g}_P(m) = \frac{2}{1-2\alpha} \hat{f}_P(m)^3, \quad m \in \mathbb{Z}\setminus\{0\}.\]
We can optimize $\alpha$ to suit our particular $f_P$, this ends up giving $\alpha = 0.574975$, which is reasonably close to the true value of $\mu_2^2$. Let $G_P$ be as in the statement for Lemma~\ref{holdlem}, i.e. 
\[ G_P(x) = \begin{cases} 1 & \text{for } x \in [-1/2,1/2] \\ 1-g_P(x) & \text{otherwise.} \end{cases}\]
We need to accurately bound $\sum_{m \neq 0} |\hat{G}_P(m) |^{4/3}$ from above. We can proceed similar to our recent upper bound calculation, using the decay of the Fourier coefficients. For $m \neq 0$ we have 
\[ \hat{G}_P(m) = -\frac{1}{2}(-1)^m \int_{-1/2}^{1/2} g_P(x) e^{-\pi i m x} \ dx.\]
We calculated the explicit dependance of $\hat{G}_P$ on $\hat{g}_P$ in \eqref{f2Ffour}, we restate it below. 
\begin{equation*} \hat{G}_P(m) = \begin{cases} -\frac{1}{2} \hat{g}_P(m/2) & \text{if } m \text{ is even} \\ (-1)^{(m+1)/2}\sum_{k\in \mathbb{Z}} \frac{\hat{g}_P(k)(-1)^k}{\pi  (2k-m)} & \text{if } m \text{ is odd.} \end{cases} \end{equation*}
We see $\hat{G}_P(2m) = 0$ for all $|m| \geq T+1$. Fix an odd $m \in \mathbb{Z}$, similar to \eqref{Fdecay} we have 
\[ |\hat{G}_P(m)| = \bigg|\sum_{k\in \mathbb{Z}} \frac{\hat{g}_P(k)(-1)^k}{\pi  (2k-m)} \bigg| =\frac{2}{\pi m} \bigg|1+\sum_{k=1}^T \frac{\hat{g}_P(k)(-1)^k}{1-4k^2/m^2} \bigg|. \]
Define $1+\theta_k = 1/(1-4k^2/m^2)$. Then if $|m| \geq 5T$ we have $|\theta_k| \leq 5k^2/m^2$. It follows that 
\[ \bigg|1+\sum_{k=1}^T \frac{\hat{g}_P(k)(-1)^k}{1-4k^2/m^2} \bigg| \leq  \bigg|1+\sum_{k=1}^T \hat{g}_P(k)(-1)^k\bigg| + \sum_{k=1}^T \theta_k|\hat{g}_P(k)|. \]

We have the following estimate using \eqref{fpstat}.

\[ |\hat{G}_P(m)| < \frac{2}{\pi  m} \left(\bigg| 1+\frac{2(0.074\ldots)}{1-2(0.574\ldots)}\bigg|  + 1.05\cdot 10^7/m^2 \right) < 0.00087/m,\]

for all odd $|m| \geq 30T$. This gives the bound on the tail sum for all $N \geq  30T$:
\[ \sum_{m > N} |\hat{G}(2m-1)|^{4/3} < 0.00087^{4/3}\int_N^\infty (2x-1)^{-4/3} < 0.000125(2N-1)^{-1/3}.  \]
We compute that 
\[ \sum_{0 \neq |m| \leq 10^6} |\hat{G}_P(m) |^{4/3}  = 1.885128081\ldots. \]
Hence $\sum_{ |m| \neq 0} |\hat{G}_P(m) |^{4/3} < 1.885129002\ldots$. By Lemma~\ref{holdlem} we conclude 
\[ \mu_2^2 \geq \frac{1}{2} + \frac{1}{2 \cdot (1.885129002\ldots)^3} > 0.574635728.\]
This concludes the proof of Theorem~\ref{MT}.

\section{Number theoretic corollaries} 

In this section we briefly discuss how results on $B_h[g]$ sets can be obtained from our estimates of $\mu_2$. We rely heavily on the method of Green~\cite{BG}. The cornerstone of several of the number theoretic results proved by Green is the following. 

\begin{theorem}[Green \cite{BG}, Theorem 6.]\label{BGthm} Let $H \colon \{1,\ldots,N\} \to \mathbb{R}$ be a function such that $\sum_{j=1}^n H(j) = N$, and $v,X$ be positive integers. For each $r \in Z_{2N+v}$ put
\[ \hat{H}(r) = \sum_{x \in Z_{2N+v}} e^{2\pi i rx/(2N+v)} H(x),\]
the discrete Fourier transform. Let $g \in C^1[-1/2,1/2]$ be such that $\int_{-1/2}^{1/2} g(x) \ dx= 2$. Then there is a constant $C$, depending only on $g$ such that 
\[ \sum_{0<|r|\leq X} |\hat{H}(r)|^4 \geq \gamma(g)N^4\bigg( 1 - C\bigg( \frac{v}{N}+\frac{N^2}{v^2X}+\frac{X^2}{N}\bigg)\bigg),\]
where 
\[ \gamma(g) = 2\left( \sum_{r \geq 1 } |\tilde{g}(r/2)|^{4/3}|\right)^{-3}.\]

\end{theorem}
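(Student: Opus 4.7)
The plan is to establish Theorem~\ref{BGthm} as a discrete analog of Lemma~\ref{holdlem}. The backbone has three ingredients: construct a dual function $G$ on $\mathbb{Z}_{2N+v}$ whose continuous analog is the $G$ of Lemma~\ref{holdlem}; apply Plancherel on $\mathbb{Z}_{2N+v}$ together with H\"older's inequality to convert an orthogonality relation between $H$ and $G$ into a lower bound on $\sum |\hat H|^4$; and then relate the discrete $\hat G(r)$ to the continuous $\tilde g(r/2)$ so that the constant $\gamma(g)$ emerges naturally.

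First, I would define $G \colon \mathbb{Z}_{2N+v} \to \mathbb{R}$ by setting $G(x) = 1$ for $x \in \{1,\ldots,N\}$ (the support of $H$) and $G(x) = 1 - g^*(x)$ on the complementary buffer $\{N+1,\ldots,2N+v\}$, where $g^*$ is $g$ affinely rescaled from $[-1/2,1/2]$ onto a window of length $N+v$ and sampled at the integer points. The normalization $\int g = 2$ is chosen precisely so that $\sum_x G(x) = 0$, i.e.\ $\hat G(0) = 0$ up to a discretization error of size $O(v/N)$; the $C^1$ assumption on $g$ controls how well $\hat G(r)$ approximates $\tilde g(r/2)$ for small $r$ and how quickly it decays for large $r$. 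Plancherel on $\mathbb{Z}_{2N+v}$ then gives
\[ (2N+v) N = (2N+v) \sum_x H(x) G(x) = \sum_{r \neq 0} \hat H(r) \overline{\hat G(r)}, \]
and after splitting off the tail $|r| > X$ (controlled via the decay $|\hat G(r)| \ll (N+v)/r^2$ obtained from one summation-by-parts) and applying H\"older with conjugate exponents $(4, 4/3)$ on the remaining range, one obtains
\[ (2N+v)N - \mathcal{E}_{\mathrm{tail}} \leq \Bigl(\sum_{0 < |r| \leq X} |\hat H(r)|^4\Bigr)^{1/4} \Bigl(\sum_{0 < |r| \leq X} |\hat G(r)|^{4/3}\Bigr)^{3/4}. \]

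To finish, I would replace the discrete sum $\sum |\hat G(r)|^{4/3}$ by its continuous counterpart involving $\tilde g(r/2)$. Recognizing $\hat G(r)$ as a Riemann sum for the Fourier integral of the rescaled $g$ yields, by standard quadrature estimates, $\hat G(r) = \tfrac{1}{2}(N+v)\tilde g(r/2) + O(1)$ uniformly for $|r| \leq X$; substituting into the H\"older bound and solving for $\sum_{0 < |r| \leq X} |\hat H(r)|^4$ produces the claimed inequality, with $\gamma(g) = 2(\sum_{r \geq 1} |\tilde g(r/2)|^{4/3})^{-3}$ emerging from the absorbed factors of $(2N+v)$. The hard part is a careful bookkeeping of the three error terms: $v/N$ comes from the mismatch between the buffer length $N+v$ and the natural length $N$; $N^2/(v^2 X)$ arises from bounding the Fourier tail $|r| > X$ using the $(N+v)/r^2$ decay of $\hat G$; and $X^2/N$ arises from the Riemann-sum quadrature error accumulated across the $O(X)$ frequencies retained. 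Each requires a quantitative estimate using the $C^1$ smoothness of $g$, arranged so that the leading constant $\gamma(g)$ is undistorted by the approximations.
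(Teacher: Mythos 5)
This statement is not proved in the paper at all: it is quoted from Green \cite{BG} (Theorem 6), and the closest thing in the paper is its continuous analogue, Lemma~\ref{holdlem}. Your plan --- build a dual function $G$ on $\mathbb{Z}_{2N+v}$ equal to $1$ on the support of $H$ and $1-g^*$ on the buffer, use Plancherel plus H\"older with exponents $(4,4/3)$, then compare the discrete coefficients $\hat G(r)$ with $\tilde g(r/2)$ --- is exactly the discretization of Lemma~\ref{holdlem} and matches the structure of Green's statement, so the architecture is right.

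Two quantitative steps, as written, fail, and they are the steps the theorem actually consists of. (1) Normalization: with your construction, for $r\neq 0$ the all-ones part sums to zero, so $\hat G(r)=-\sum_{x\in B}g^*(x)e^{2\pi i rx/(2N+v)}\approx (\text{phase})\cdot (N+v)\,\tilde g\bigl(r(N+v)/(2N+v)\bigr)\approx (\text{phase})\cdot (N+v)\,\tilde g(r/2)$, not $\tfrac12(N+v)\tilde g(r/2)+O(1)$. The factor of $2$ is not cosmetic: with main term $\sum_{r\neq0}\hat H(r)\overline{\hat G(r)}\approx 2N^2$ and $|\hat G(r)|\approx N|\tilde g(r/2)|$, H\"older gives exactly $2N^4\bigl(\sum_{r\ge1}|\tilde g(r/2)|^{4/3}\bigr)^{-3}=\gamma(g)N^4$, whereas your stated approximation gives $16\,\gamma(g)N^4$, which is false (a discretization of the near-extremal $f$ shows $\sum_{r\neq0}|\hat H(r)|^4\approx(2\mu_2^2-1)N^4$, and $\sup_g\gamma(g)=2\mu_2^2-1$). (2) The tail: one summation by parts gives only $|\hat G(r)|\ll (2N+v)/|r|$, since $g$ is merely $C^1$ and your $G$ has jumps of size $|g(\pm1/2)|$ at the two junctions; the claimed $(N+v)/r^2$ decay does not follow. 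The shape of Green's error term $N^2/(v^2X)$ indicates that the length-$v$ slack must be spent on transition ramps of slope $O(1/v)$ (yielding $|\hat G(r)|\ll_g N^2/(vr^2)$), which your construction --- rescaling $g$ over the whole length-$(N+v)$ buffer with no smoothing --- does not provide; relatedly, bounding $\sum_{|r|>X}|\hat H(r)||\hat G(r)|$ needs some control of $|\hat H(r)|$ on the tail (for real $H$ one only has $|\hat H(r)|\le\sum_x|H(x)|$, which is not $O(N)$), and your sketch does not say how this is obtained. So the skeleton is the correct one, but the constant tracking and the tail estimate need to be redone before this counts as a proof.
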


Green finds a function $g \in C^1[-1/2,1/2]$ such that $\gamma(g)>1/7$. Note that Lemma~\ref{holdlem} shows the existence of a $g \in L^2[-1/2,1/2]$ such that $\gamma(g) = 2\mu_2^2-1$. Following a similar argument to Lemma~\ref{L2lem} we can smooth out $g$ into a $C^1$ function while increasing $\gamma(g)$ an arbitrarily small amount. We conclude that Theorem~\ref{BGthm} can be stated by replacing $\gamma(g)$ with $2\mu_2^2-1$. \\

%\noindent \textbf{Proof of bounds in Corollary~\ref{NTcor}.} 

\begin{proof}[Proof of Corollary \ref{NTcor}] To obtain the three bounds on $\sigma_h[g]$ we simply run through the method of Green, replacing the $1/7$ bound with $2\mu_2^2-1$. The bound on $\sigma_4(1)$ is found on \cite[Equation 30]{BG} and stated in \cite[Theorem 15]{BG}. The bound for $\sigma_3(1)$ is obtained through \cite[Lemma 16]{BG} and stated in \cite[Theorem 17]{BG}. Finally, the bound for $\sigma_2(g)$ is obtained by replacing $8/7$ with $2\mu_2^2$ in \cite[Equation 37]{BG}, thereby giving an improved version of \cite[Theorem 24]{BG}. \end{proof}

Lastly, in our proof of Corollary \ref{AEcor} we scale a function on $[N]$ to a simple function on $[0,1]$, and check that the inequalities work in our favour.

\begin{proof}[Proof of Corollary \ref{AEcor}] Let $H \colon [N] \to \mathbb{R}$ be a function with $\sum_{j=1}^N H(j) = N$. Recall the definition of discrete convolution 
\[ H \ast H(x) = \sum_{j=1}^N H(j)H(x-j),\]
and so the additive energy of $H$ is given by $\sum_{j=1}^N H \ast H(x)^2$.
Define the simple function $f \colon [0,1) \to \mathbb{R}$ by 
\[ f(x) = \sum_{j=1}^N H(j) \ind_{((j-1)/N,j/N]}(x).\]
Clearly $f(x-1/2) \in \mathcal{F}$, and so $\|f \ast f\|_2^2 \geq \mu_2^2$. The function $f\ast f$ consists of $2N$ line segments with domain $((j-1)/N,j/N)$ for $j \in [2N]$. Moreover, for all $j \in [2N]$ we have 
\begin{equation}\label{fHrel} Nf \ast f((j-1)/N) = H \ast H (j).\end{equation}
For any line segment $\ell \colon [a,b] \to \mathbb{R}$ we have the following estimate by convexity. 
\begin{equation}\label{lscon} \int_a^b \ell(x)^2 \ dx \leq \frac{\ell(a)^2+\ell(b)^2}{2}(b-a).\end{equation}
By \eqref{lscon} we have
\[ \int_0^2 f\ast f(x)^2 \ dx = \sum_{j=1}^{2N} \int_{(j-1)/N}^{j/N} f\ast f (x)^2 \ dx \leq \frac{N}{2}\sum_{j=1}^{2N}\big(f \ast f( j/N)^2 +f \ast f( (j-1)/N)^2 \big).  \]
And by \eqref{fHrel} the above becomes
\[ \frac{N^3}{2}\sum_{j=1}^{2N}\big(H \ast H( j)^2 +H \ast H( j-1)^2 \big) = N^3 \sum_{j=1}^{2N} H \ast H(j)^2 .\]
This proves the additive energy bound.

\end{proof}

\vspace{1cm}
\textbf{Acknowledgements.} The author thanks Greg Martin, Kevin O'Bryant, and Jozsef Solymosi for helpful comments during the preparation of this work.

\vspace{1cm}

\begin{small}
\noindent {\sc Department of Mathematics  \\ The University of British Columbia \\Room 121, 1984 Mathematics Road \\ Vancouver, BC \\ Canada V6T 1Z2} \\
\url{epwhite@math.ubc.ca}\\

\end{small}

\clearpage

\section{Appendix: A simple almost-optimizer}

We discuss functions of the form 
\[ f_c(x) = \frac{\alpha_c}{(1/4-x^2)^c}, \quad \alpha_c = \frac{\Gamma(2-2c)}{\Gamma^2(1-c)}.\]
Note that $f_c \in \mathcal{F}$ for all $c <1$, by straightforward calculation of a Beta function. We thank Kevin O'Bryant for suggesting this family. The function $f_{1/2}$ is the probability density function of the arcsine distribution, and has appeared in other works on autoconvolutions such as \cite{BS,MO2}. The reason for its relevance to autoconvolutions is perhaps the fact that the singularities at $x = \pm 1/2$ are precisely the right magnitude so that $f \ast f$ neither goes to 0, nor blows up at $x = \pm 1$. This makes $f \ast f$ flatter overall compared to a function that vanishes or blows up at $x= \pm 1$, and so the norms of $f \ast f$ may be smaller than most other functions. 

\vspace{0.1cm}

To numerically evaluate $\|f_c \ast f_c\|_2$ we use \eqref{Meq1}. To calculate the Fourier coefficients we apply the Mehler-Sonine formula \cite[\S 3.71 (9)]{GW}:

\[ J_\nu(z) = \frac{\left(\frac{z}{2} \right)^\nu}{\Gamma(\nu + \frac{1}{2}) \sqrt{\pi} }\int_{-1}^1 e^{i z s} (1-s^2)^{\nu - 1/2} \ ds, \quad z \in \mathbb{C}, \nu > -1/2,\]
where $J_\nu(z)$ is the order $\nu$ Bessel function of the first kind. This gives the Fourier coefficients for $k \neq 0$:
\[ \hat{F}_c(k) = \frac{1}{2} \int_{-1/2}^{1/2} e^{\pi i k x} f_c(x) =  \frac{\pi^c \Gamma(2-2c)}{2 \Gamma(1-c)} \cdot \frac{J_{1/2-c}(\pi k/2)}{k^{1/2-c}},\]
where $F_c$ is the extension of $f_c$ onto $[-1,1]$, zero outside of $[-1/2,1/2]$. It follows from \eqref{Meq1} that  
\[ \| f_c \ast f_c \|_2^2 = 8 \sum_{k \in \mathbb{Z}} |\hat{F}_c(k)|^4 = \frac{1}{2} +  \frac{\pi^{4c}\ \Gamma^4(2-2c)}{\Gamma^4(1-c)}\sum_{k=1}^\infty \frac{J_{1/2-c}^4(\pi k/2)}{k^{2-4c}}. \]

We estimate the tail of the above series using the bound $|J_{1/2-c}(\pi k) | \leq k^{-1/2}$ for all $k \geq 2$ and $c>0$ \cite[\S 7.1]{GW}. Using Matlab, we numerically determined the optimal $f_c$ to be at $c = 0.4942$ with 
\[ \| f_{0.4942} \ast f_{0.4942} \|_2^2 = 0.5746482,\]
within an error of $5 \cdot 10^{-9}$. We remark that this is only $4.5 \cdot 10^{-6}$ worse than the upper bound found using the convex program.

\end{document}